\newtheorem{Theorem}{Theorem}[section]
\newtheorem{Lemma}[Theorem]{Lemma}
\newtheorem{Proposition}[Theorem]{Proposition}
\newtheorem{corollary}[Theorem]{Corollary}
\theoremstyle{remark}
\newtheorem{remark}{Remark}
\title[]{Some results on compact K\"ahler manifolds with elliptic homotopy type}
\author{Yang Su}
\address{School of Mathematics and Systems Science, Chinese Academy of Sciences, Beijing 100190, China}
\address{School of Mathematical Sciences, University of Chinese Academy of Sciences, Beijing 100049, China}
\email{suyang@math.ac.cn}
\author{Jianqiang Yang}
\address{Department of Mathematics, Honghe University,  Yunan 661199, China}
\email{yangjq\_math@sina.com}
\date{}
\begin{document}

\begin{abstract}
We show some results of compact K\"ahler manifolds with elliptic homotopy type. In complex dimension $4$ we list the Hodge diamonds of compact K\"ahler manifolds with elliptic homotopy type. In general dimension we obtain a partial characterization of the Hodge diamonds.
\end{abstract}

\maketitle

\section{introduction}
From the rational homotopy theory point of view, nilpotent spaces are divided into two subclasses---rationally elliptic spaces and rationally hyperbolic spaces.  A closed simply connected manifold is \emph{rationally elliptic} if 
$$\dim \pi_*(M) \otimes \mathbb Q = \sum_{k\ge 2} \pi_k(X) \otimes \mathbb Q < \infty.$$
Rationally elliptic spaces form a special class of spaces -- there are much more rationally hyperbolic spaces -- however, they have nice algebraic-topological properties and are closely related to examples and conjectures in Riemannian geometry \cite{GH82}.  
Examples  of rationally elliptic manifolds include spheres, projective spaces, Grasssmannians, homogeneous spaces $G/H$, and fiber bundles with fiber and base simply-connected elliptic manifolds. A classification of rationally elliptic manifolds in low dimensions was given  in \cite{Herm17}. From such classifications one find more examples of rationally elliptic manifolds (see e.~g.~\cite[Proposition 4.6]{Herm17}).
 
The cohomology algebra of an elliptic space is subject to strong restrictions. Similar phenomena occur in complex geometry, where the cohomology algebras  of compact K\"ahler manifolds are endowed with rich structures. This comparison is one of the motivations for  the study of compact K\"ahler manifolds with rational elliptic homotopy type. Examples  of such manifolds include complex projective spaces and Grasssmannians, homogeneous spaces $G/P$, and holomorphic fiber bundles with fiber and base compact K\"ahler manifolds with elliptic homotopy type.  In \cite{AB} Amor\'os and Biswas considered compact K\"ahler manifolds with elliptic homotopy type in dimension $\le 3$. In dimension $2$ they classified compact K\"ahler surfaces with elliptic homotopy type (modulo the existence of fake quadrics); in dimension $3$ they gave a complete characterization of compact K\"ahler threefolds in terms of their Hodge diamonds. This characterization enables them to classify Fano threefolds with elliptic homotopy type, and found more compact K\"ahler threefolds with elliptic homotopy type which are neither homogenous nor fibrations (see \cite[Remark 4.9]{AB}) . 

The purpose of this paper is to extend our understanding of compact K\"ahler manifolds with elliptic homotopy type to dimension $4$ and higher. In dimension $4$, we obtain a list of possible Hodge diamonds and show that this is a partial characterization of compact K\"ahler fourfolds with elliptic homotopy type (Theorem \ref{thm:hodge}). As a corollary, we show that the Hodge conjecture is true for these manifolds, except for one possibility of Betti numbers. In general dimensions we  prove the vanishing of the geometric genus of compact K\"ahler manifolds with elliptic homotopy type (Theorem \ref{thm:dual}), and identify all complete intersections and smooth Fano weighted complete intersections with elliptic homotopy type (Theorem \ref{thm:ci}).

\begin{Theorem}\label{thm:hodge}
Let $X$ be a simply-connected compact K\"ahler manifold of complex dimension $4$. 
If $X$ is rationally elliptic, then 
\begin{enumerate}
\item the odd dimensional Betti numbers of $X$ are all zero;
\item the even dimensional  Hodge diamond of $X$ must be one of the following

\begin{eqnarray*}
\emph{(a) \ } 
\begin{array}{ccccc}
& & 1 & & \\
& 0 & 1 & 0 & \\
0 & 0 & 1 & 0 & 0\\
& 0 & 1 & 0 & \\
& & 1 & &
\end{array}
\ \ \ \emph{(b) \ } 
\begin{array}{ccccc}
& & 1 & & \\
& 0 & 1 & 0 & \\
0 & 0 & 2 & 0 & 0\\
& 0 & 1 & 0 & \\
& & 1 & &
\end{array} 
 \ \ \ \emph{(c) \ } 
\begin{array}{ccccc}
& & 1 & & \\
& 0 & 2 & 0 & \\
0 & 0 & 2 & 0 & 0\\
& 0 & 2 & 0 & \\
& & 1 & &
\end{array} 
\end{eqnarray*}

\begin{eqnarray*}
\emph{(d) \ }
\begin{array}{ccccc}
& & 1 & & \\
& 0 & 2 & 0 & \\
0 & 0 & 3 & 0 & 0\\
& 0 & 2 & 0 & \\
& & 1 & &
\end{array} 
\ \ \ \emph{(e) \ }
\begin{array}{ccccc}
& & 1 & & \\
& 0 & 3 & 0 & \\
0 & 0 & 4 & 0 & 0\\
& 0 & 3 & 0 & \\
& & 1 & &
\end{array}
\ \ \ \emph{(f) \ }
\begin{array}{ccccc}
& & 1 & & \\
& 0 & 4 & 0 & \\
0 & 0 & 6 & 0 & 0\\
& 0 & 4 & 0 & \\
& & 1 & &
\end{array} 
\end{eqnarray*}
\begin{eqnarray*}
\emph{(g) \ }
\begin{array}{ccccc}
& & 1 & & \\
& 1 & 2 & 1 & \\
0 & 2 & 2 & 2 & 0\\
& 1 & 2 & 1 & \\
& & 1 & &
\end{array}.
\end{eqnarray*}
\end{enumerate}

 All the Hodge diamonds except (g) can be realized by compact K\"ahler fourfolds with elliptic homotopy type. If $X$ has Hodge diamond (a) - (d), then $X$ is rationally  elliptic. If $X$ has Hodge diamond (a), then $X$ is real homotopy equivalent to $\mathbb C \mathrm P^4$; if $X$ has Hodge diamond (b), then $X$ is real homotopy equivalent to the quadric $V(2)$. 
\end{Theorem}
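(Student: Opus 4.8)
The plan is to push everything onto the graded algebra $H^*(X;\mathbb Q)$ and then run a finite classification cut out by Hodge theory. Since $X$ is compact K\"ahler it is formal (Deligne--Griffiths--Morgan--Sullivan), so its real homotopy type is determined by $H^*(X;\R)$ and $X$ is rationally elliptic exactly when the algebra $H^*(X;\mathbb Q)$ is elliptic. I will use the standard structure theory of elliptic spaces: Poincar\'e duality, $\chi(X)\ge 0$ with $\chi(X)>0$ precisely when $H^{\mathrm{odd}}(X;\mathbb Q)=0$, the formal-dimension identity $8=\sum_j\deg y_j-\sum_i(\deg x_i-1)$ over the odd generators $y_j$ and even generators $x_i$ of a minimal model, and the fact that in the positively elliptic case $H^*(X;\mathbb Q)$ is a complete intersection $\mathbb Q[x_1,\dots,x_k]/(f_1,\dots,f_k)$. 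On the geometric side I will use the Hodge decomposition with $h^{p,q}=h^{q,p}$ and hard Lefschetz, which gives $1=b_0\le b_2\le b_4$, $b_i=b_{8-i}$, injectivity of $\omega\colon H^{p,q}\to H^{p+1,q+1}$ for $p+q\le 3$, and the Hodge--Riemann bilinear relations.

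For (1), simple connectivity gives $b_1=0$, Poincar\'e duality gives $b_7=0$ and $b_3=b_5$, so it suffices to prove $b_3=0$. First I recover the classical evenness of $b_3$: the hard Lefschetz isomorphism $\omega\colon H^3\xrightarrow{\ \sim\ }H^5$ identifies the Poincar\'e pairing $H^3\times H^5\to H^8$ with the alternating form $(u,w)\mapsto\int_X\omega\,u\,w$ on $H^3$, whose nondegeneracy forces $\dim H^3$ to be even. To upgrade this to $b_3=0$ I use ellipticity: because $H^1=0$, every class in $H^3$ is indecomposable and so contributes a degree-$3$ odd generator to the minimal model, whence $b_3\neq0$ forces $\chi(X)=0$ and at least two unpaired odd generators; I then show that matching the formal-dimension identity to $8$ while retaining a Poincar\'e duality algebra containing a degree-$2$ class is impossible, by a finite bookkeeping of the admissible generator degrees. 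This exclusion is one of the two main obstacles.

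Granting $b_3=0$ we have $\chi(X)>0$, so $X$ is positively elliptic and $H^*(X;\mathbb Q)$ is a complete intersection of formal dimension $8$ containing a degree-$2$ generator. Enumerating the finitely many such Poincar\'e polynomials subject to the unimodal symmetric constraint from hard Lefschetz produces exactly the six Betti sequences $1,1,1,1,1$; $1,1,2,1,1$; $1,2,2,2,1$; $1,2,3,2,1$; $1,3,4,3,1$; $1,4,6,4,1$. The refinement into Hodge numbers is the second, and subtler, obstacle. Here I combine $h^{p,q}=h^{q,p}$, $h^{1,1}\ge 1$, the Lefschetz inequalities $h^{3,1}\ge h^{2,0}$ and $h^{2,2}\ge h^{1,1}$, and the vanishing of the geometric genus $h^{4,0}=0$ (established as in Theorem \ref{thm:dual}: a nonzero holomorphic $4$-form would make $K_X$ effective, i.e. $\kappa(X)\ge 0$, whereas a rationally elliptic K\"ahler manifold has $\kappa=-\infty$) with the elliptic ring structure: a nonzero holomorphic $2$-form $\beta$ satisfies $\beta^2\in H^{4,0}=0$, and the relations this forces are compatible with a complete-intersection algebra of the given Betti numbers only in the listed cases. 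This rules out the spurious off-diagonal candidates (for instance the one with degree-$2$ row $1\,1\,1$ attached to the sequence $1,3,4,3,1$) and leaves precisely the diagonal diamonds (a)--(e) for the first five sequences and the two diamonds (f) and (g) for $1,4,6,4,1$.

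It remains to address realizability, the converse, and the identifications. Realizability of (a)--(f) is exhibited by the homogeneous (hence rationally elliptic) K\"ahler fourfolds $\cp^4$, $V(2)$, $\cp^1\times\cp^3$, $\cp^2\times\cp^2$, $\cp^1\times\cp^1\times\cp^2$, and $(\cp^1)^4$, whose Hodge diamonds one computes directly. For the converse, when the diamond is (a)--(d) the small Betti numbers together with hard Lefschetz and Poincar\'e duality force the ring to be a complete intersection, hence elliptic: for (a) one gets $H^*(X;\mathbb Q)\cong\mathbb Q[h]/(h^5)$; for (b) a degree-$4$ class $\ell$ may be normalized so that $h\ell=0$ and $\ell^2=\nu h^4$ with $\nu\neq0$ by nondegeneracy of the middle pairing; for (c),(d) the degree-$4$ part is pinned by $\mathrm{Sym}^2 H^2$ and the induced relations. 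Finally, for (a) and (b) this computation shows $H^*(X;\R)$ is isomorphic as a graded algebra to $H^*(\cp^4;\R)$ respectively $H^*(V(2);\R)$ --- over $\R$ the scalar $\nu$ is normalized to $\pm1$ and its sign is fixed by the Hodge--Riemann signature, which depends only on the common Hodge numbers --- and, both spaces being formal, an isomorphism of real cohomology algebras is induced by a real homotopy equivalence. I expect the two flagged steps, the vanishing of $b_3$ and the Hodge-number refinement, to be where the genuine K\"ahler input (Hodge--Riemann positivity and the holomorphic-form subalgebra, beyond mere formal algebra) is indispensable.
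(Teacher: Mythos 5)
Your architecture (formality reduces everything to the cohomology algebra, then elliptic structure theory plus Hodge/Lefschetz constraints cut down a finite list) is essentially the paper's, and your treatment of part (1), the Betti-number enumeration, realizability, and the identifications for (a), (b) all go through. But there is a genuine gap in the step you yourself flag as subtle: the exclusion of the off-diagonal candidate with $b_2=3$, i.e.\ $h^{2,0}=h^{1,1}=h^{0,2}=1$, $h^{3,1}=h^{1,3}=1$, $h^{2,2}=2$ attached to the Betti sequence $1,3,4,3,1$. Your stated mechanism --- that $\beta^2\in H^{4,0}=0$ forces relations ``compatible with a complete-intersection algebra of the given Betti numbers only in the listed cases'' --- is false for exactly this candidate. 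Take
$$A=\mathbb C[\omega,\beta,\bar\beta]\big/\bigl(\beta^2,\ \bar\beta^2,\ \omega^3+c\,\omega\beta\bar\beta\bigr),\qquad c\ne 0,$$
with $\omega$ of bidegree $(1,1)$ and $\beta,\bar\beta$ of bidegree $(2,0),(0,2)$. The three relations form a regular sequence (one checks $\omega^5=\beta^2=\bar\beta^2=0$ in $A$, so the radical of the ideal is the irrelevant ideal), so $A$ is an elliptic complete intersection of formal dimension $8$ with Betti numbers $1,3,4,3,1$, a consistent bigrading with $h^{4,0}=0$, and it even satisfies hard Lefschetz ($\omega^4=-c\,\omega^2\beta\bar\beta\ne 0$ and $\omega^2\colon H^2\to H^6$ is injective). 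So no amount of ring-theoretic or Lefschetz bookkeeping rules this out. What kills it is the Hodge index theorem: for this diamond $\operatorname{sign}(X)=\sum_{p,q}(-1)^q h^{p,q}=4=b_4$, so the intersection form on $H^4(X;\mathbb R)$ would be positive definite, while the second Hodge--Riemann bilinear relation makes $\alpha=\varphi+\bar\varphi$ (for $0\ne\varphi\in H^{3,1}$, noting $\varphi^2\in H^{6,2}=0$) a real class with $Q(\alpha,\alpha)=2\int_X\varphi\wedge\bar\varphi<0$. This is the paper's argument (Proposition \ref{prop:notpure}), and you need to supply it: you list Hodge--Riemann among your tools and ``expect'' it to be indispensable here, but the concrete argument you actually give does not use it and does not work. (Your mechanism is fine for the other spurious candidate, the row $1\,4\,1$ at $b_2=4$, where the forced relation $\beta(x-k\omega)$ really is a zero divisor modulo $\beta^2$.)

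A secondary point: your parenthetical justification of $h^{4,0}=0$ via ``a rationally elliptic K\"ahler manifold has $\kappa=-\infty$'' is not a citable fact and is essentially what one is trying to prove; the correct route is the bigraded-model argument of Theorem \ref{thm:dual} (the subalgebra $\wedge(\oplus_i V^{i,0})$ is elliptic of formal dimension $<n$), which you also mention, so stick to that. Apart from these two items, your proof of part (1) is actually cleaner than the paper's for $n=4$: the paper proves the general-dimension statement (Theorem \ref{thm:betti1}) via pure models and the evenness of $\dim\ker(d|_{V^{\mathrm{odd}}})$, whereas your finite bookkeeping collapses to the single exponent pattern $(1;2,2,2)$, which dies immediately because $d|_{V^3}$ either kills $\omega^2$ or leaves the model non-elliptic.
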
 

\begin{remark}
It's shown in \cite[Theorem]{Wil86} that a complex projective fourfold with Hodge diamond (a) is either isomorphic to $\mathbb P^4$, or to another variety with pre-described invariants, whose existence is yet unknown.
\end{remark}

\begin{remark}
We don't have an example of a compact K\"ahler fourfold with elliptic homotopy type whose Hodge diamond is (g). In a subsequent paper it will be shown that this Hodge diamond cannot be realized by a projective manifold with elliptic homotopy type.
\end{remark}

\begin{remark}
It was asked by complex geometers whether the Chern number $\langle c_4(X), [X] \rangle $ is positive for a compact K\"ahler manifold $X$ of complex dimension $4$ (c.~f.~\cite[Problem 45]{YS}).  Theorem \ref{thm:hodge} (1) gives an affirmative answer when $X$ has elliptic homotopy type.

Rationally elliptic spaces with positive Euler characteristics are of particular interest. Some nice properties of these spaces are reflected in the Halperin conjecture (see \cite[\S 4.2, \S 9.7]{FOT}). It's known that the conjecture is true for compact K\"ahler manifolds.  In particular, by Theorem \ref{thm:hodge} and \cite{AB} the Halperin conjecture is true for compact K\"ahler manifolds of dimension $\le 4$. Therefore, by one geometric formulation of the conjecture, the space of self-homotopy equivalences of a compact K\"ahler manifold of dimension $\le 4$ with elliptic homotopy type is rationally homotopy equivalent to a product of odd dimensional spheres.
\end{remark}


\begin{corollary}\label{cor:hc}
Let $X$ be a smooth complex projective variety of dimension $4$ with elliptic homotopy type, if $b_{2}(X) \ne 1$ or $b_{4}(X) \ne 2$, then the Hodge conjecture holds for $X$, i.~e., the Hodge classes of $X$ are all algebraic.
\end{corollary}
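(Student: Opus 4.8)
The plan is to reduce the Hodge conjecture for $X$ to a single statement about primitive middle cohomology and then read off the answer from the list of Hodge diamonds in Theorem \ref{thm:hodge}. First I would dispose of every cohomological degree except the middle one. For a smooth projective fourfold the conjecture is trivial in degrees $0$ and $8$; it is the Lefschetz $(1,1)$-theorem in degree $2$ (the Hodge classes there are exactly divisors); and in degree $6$ it follows from the hard Lefschetz isomorphism $L^2\colon H^2(X;\mathbb{Q})\xrightarrow{\ \sim\ }H^6(X;\mathbb{Q})$, an isomorphism of rational Hodge structures sending a divisor $D$ to the algebraic class $\omega^2\cup D$, where $\omega$ denotes the hyperplane class. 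So only $H^4(X)$ remains, and here I would use the rational hard Lefschetz decomposition $H^4=P^4\oplus(\omega\cup H^2)$. The non-primitive summand meets $H^{2,2}$ in $\omega\cup H^{1,1}$, and a rational Hodge class there has the form $\omega\cup\beta$ with $\beta\in H^2(X;\mathbb{Q})$ of type $(1,1)$ (rationality of $\beta$ follows from the rational injectivity of $L=\omega\cup-$ on $H^2$), hence $\beta$ is a divisor and the class is a product of divisors, so it is algebraic. Thus the Hodge conjecture for $X$ is \emph{equivalent} to the algebraicity of the rational classes in the primitive piece $P^{2,2}\subset H^4$, and a hard Lefschetz count yields the clean formula $\dim P^{2,2}=h^{2,2}-h^{1,1}$.

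Next I would invoke Theorem \ref{thm:hodge}: the Hodge diamond of $X$ is one of (a)--(g), and the hypothesis ``$b_2\neq1$ or $b_4\neq2$'' excludes precisely diamond (b). The identity $\dim P^{2,2}=h^{2,2}-h^{1,1}$ then settles three cases at a stroke: for (a), (c) and (g) one has $h^{2,2}=h^{1,1}$, so $P^{2,2}=0$ and every rational Hodge class in $H^4$ is non-primitive, hence algebraic by the previous paragraph. I want to emphasize that (g) is covered here as well, even though it carries a holomorphic $2$-form ($h^{2,0}=1$) and a transcendental part in $H^4$ ($h^{3,1}=2$); the only thing that matters is $h^{2,2}=h^{1,1}=2$, which forces $P^{2,2}=0$.

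For the remaining diamonds (d), (e), (f) one has $P^{2,2}\neq0$, but now $h^{2,0}=0$ and $h^{3,1}=0$, so $H^2(X;\mathbb{Q})=H^{1,1}$ consists entirely of divisor classes and $H^4(X;\mathbb{Q})=H^{2,2}$ consists entirely of Hodge classes. The plan is to show that all of $H^4$ is spanned by products of divisors, i.e.\ $H^4=H^2\cup H^2$, and I would deduce this from rational ellipticity. Since $X$ has positive Euler characteristic, its rational cohomology is a complete-intersection algebra with generators in even degree (cf.\ \cite{FOT}); matching the Poincar\'e polynomials (with $s=t^2$), whose factorizations are $(1-s^{3})^{2}/(1-s)^{2}$, $(1-s^{2})^{2}(1-s^{3})/(1-s)^{3}$ and $(1-s^{2})^{4}/(1-s)^{4}$ respectively, shows that all algebra generators lie in degree $2$. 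Hence $H^4=H^2\cup H^2$, every Hodge class in $H^4$ is a product of divisors, and the conjecture follows.

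The main obstacle is exactly this last step---guaranteeing that the cohomology ring is generated in degree $2$ for (d), (e), (f), i.e.\ ruling out a degree-$4$ algebra generator---since the Hodge diamond by itself does not determine the ring structure. This is also the precise point where diamond (b) breaks down and must be excluded: there $b_2=1$, so products of divisors fill only the line $\langle\omega^2\rangle$ inside the two-dimensional $H^4$, and the complete intersection is forced to acquire a genuine degree-$4$ generator whose class need not be algebraic (for the quadric it happens to be, but real homotopy equivalence to the quadric, which is all Theorem \ref{thm:hodge} supplies, does not detect algebraicity). For (d), (e), (f) the condition $b_2\ge2$ together with the classification of elliptic rational homotopy types underlying Theorem \ref{thm:hodge} is what secures generation in degree $2$, and making that input precise is where the real work lies.
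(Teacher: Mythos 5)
Your reduction is correct as far as it goes: degrees $0,2,6,8$ are standard, the rational Lefschetz decomposition does reduce everything to primitive $(2,2)$-classes, the count $\dim P^{2,2}=h^{2,2}-h^{1,1}$ is right, and your uniform disposal of diamonds (a), (c), (g) via $P^{2,2}=0$ is correct --- it is essentially the same hard Lefschetz dimension count that the paper performs for (g) alone (the paper folds (a) and (c) into a ring-generation argument instead). The genuine gap is exactly where you say it is: for diamonds (d), (e), (f) you need $H^4(X;\mathbb Q)=H^2(X;\mathbb Q)\cup H^2(X;\mathbb Q)$, and ``matching Poincar\'e polynomials'' does not prove it. The Poincar\'e polynomial of an elliptic space with positive Euler characteristic does not determine the exponents, let alone the ring: in the paper's own table of admissible exponents, $(a;b)=(1,1;2,4)$ and $(a;b)=(1,1,2;2,2,4)$ have literally the same Poincar\'e polynomial $(1+s)^2(1+s^2)$ in $s=t^2$, the first generated in degree $2$, the second carrying an indecomposable generator in degree $4$; distinguishing them is not a combinatorial matter but uses formality of K\"ahler manifolds (Lemma \ref{lem:ciso} and the exclusion argument in the proof of Theorem \ref{thm:hodge}). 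So exhibiting one complete-intersection factorization for each of (d), (e), (f), as you do, rules nothing out, and your closing admission that ``making that input precise is where the real work lies'' marks the proof as incomplete at its decisive step.

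The ingredient that closes the gap is already in the paper and you could simply have cited it: Lemma \ref{lem:gen} states that for a compact K\"ahler fourfold with elliptic homotopy type, $H^*(X;\mathbb Q)$ is generated by $H^2(X;\mathbb Q)$ except exactly when $b_2(X)=1$ and $b_4(X)=2$, i.e.\ except for diamond (b), which your hypothesis excludes; its proof (formality via Lemma \ref{lem:ciso} plus the exponent classification of \cite{Herm17}) is precisely the ``real work'' you defer. With that lemma quoted, your treatment of (d), (e), (f) finishes: since $h^{2,0}=0$, all of $H^2(X;\mathbb Q)$ consists of divisor classes by Lefschetz $(1,1)$, and since $h^{3,1}=0$, every class in $H^4(X;\mathbb Q)$ is a Hodge class and, by generation in degree $2$, a rational combination of products of divisor classes, hence algebraic. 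The completed argument is then in substance the paper's own proof, which handles (a), (c)--(f) at one stroke by Lemma \ref{lem:gen} together with Hodge level $\le 0$ and Lefschetz $(1,1)$, and treats (g) by the dimension count you used for it.
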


\begin{remark}
The remaining case with $b_{2}(X) =1$ and $b_{4}(X) = 2$  will be treated in a subsequence paper.
\end{remark}

For compact K\"ahler manifolds of elliptic homotopy type in general dimension, we also obtain some restrictions on their Hodge numbers. Recall that the geometric genus of a compact K\"ahler manifold $X$ of dimension $n$ is $p_g(X)=\dim H^n(X;\mathcal O_X)$. 

\begin{Theorem}\label{thm:dual}
Let $X$ be a compact K\"ahler manifold of complex dimension $n$ with elliptic homotopy type. Then
\begin{enumerate}
\item the geometric genus $p_g(X)=0$;
\item let 
$m$ be the largest integer such that $H^m(X,\mathcal O_X) \ne 0 $, then 
$$H^p(X, \mathcal O_X) \cong H^{m-p}(X, \mathcal O_X)$$ 
for all $p$.
\end{enumerate}
\end{Theorem}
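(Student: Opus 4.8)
\section{Proof proposal for Theorem \ref{thm:dual}}

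The plan is to reduce both assertions to a single structural statement about the algebra of holomorphic forms. Write $A=\bigoplus_p H^{p,0}(X)\subseteq H^*(X;\mathbb C)$. Complex conjugation on $H^*(X;\mathbb C)$ is a conjugate-linear algebra automorphism interchanging $H^{p,q}$ and $H^{q,p}$, so it restricts to an isomorphism $A\cong\bigoplus_q H^{0,q}(X)=H^*(X,\mathcal O_X)$; in particular $\dim A^p=\dim H^p(X,\mathcal O_X)$. Thus (1) is the statement $A^n=0$, while (2) asserts that the graded dimensions of $A$ are symmetric, $\dim A^p=\dim A^{m-p}$, where $m$ is the top degree in which $A$ is nonzero. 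It is worth noting that these symmetries are not formal consequences of Serre duality, hard Lefschetz or Hodge symmetry alone: each of those only produces tautologies such as $h^{p,0}=h^{p,0}$, so the ellipticity hypothesis must enter decisively.

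The engine for (2) is the minimal model together with Hodge theory. Since $X$ is compact K\"ahler it is formal, and by Deligne--Griffiths--Morgan--Sullivan its minimal model $(\Lambda V,d)$ carries a Hodge structure: the complexification splits into pure, effective types $V_{\mathbb C}=\bigoplus_{a,b\ge 0}V^{a,b}$ with $\overline{V^{a,b}}=V^{b,a}$, the differential $d$ is type-preserving, and the induced bigrading on cohomology is the Hodge decomposition. Because the types are effective, a monomial has type $(p,0)$ exactly when every factor does, so $W:=\bigoplus_a V^{a,0}$ spans a sub-CDGA $\Lambda W=(\Lambda V)^{*,0}$ whose cohomology is precisely $A$. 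As $X$ is elliptic, $W$ is finite-dimensional, and $A\subseteq H^*(X;\mathbb C)$ is finite-dimensional, so $(\Lambda W,d)$ is a simply connected, finitely generated minimal Sullivan algebra with finite total cohomology, hence elliptic. By Halperin's theorem every elliptic space is a Poincar\'e duality space, so $A$ is a Poincar\'e duality algebra of formal dimension $m$. This yields $\dim A^p=\dim A^{m-p}$, i.e. $H^p(X,\mathcal O_X)\cong H^{m-p}(X,\mathcal O_X)$, which is (2).

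For (1) I would compare formal dimensions. Let $c(S)=\sum_{\deg\text{ odd}}\deg-\sum_{\deg\text{ even}}(\deg-1)$ be the usual formal-dimension functional on a set $S$ of generators; for an elliptic model it equals the top nonzero cohomological degree. Conjugation identifies $W$ with $\bar W=\bigoplus_b V^{0,b}$ degree-by-degree, so $c(W)=c(\bar W)=m$, and additivity over the partition $V=W\oplus\bar W\oplus\mathrm{rest}$, with $\mathrm{rest}=\bigoplus_{a,b\ge 1}V^{a,b}$, gives $2n=c(V)=2m+c(\mathrm{rest})$. It remains to prove $c(\mathrm{rest})>0$. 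The subalgebra $\Lambda W\otimes\Lambda\bar W\subseteq\Lambda V$ is a sub-CDGA, so it realizes a fibration $F\to X\to B$ of simply connected spaces with $H^*(B)=A\otimes\bar A$ and fibre model $(\Lambda(\mathrm{rest}),\bar d)$. Here $B$ is elliptic (its cohomology is finite) and $X$ is elliptic, so by the two-out-of-three principle for elliptic fibrations the fibre $F$ is elliptic and $c(\mathrm{rest})$ is its formal dimension. Since $H^{1,1}(X)\ne 0$ there is a degree-$2$ generator of type $(1,1)$; it lies in $\mathrm{rest}$ and, being $\bar d$-closed and indecomposable, defines a nonzero class in $H^2(F)$, whence $c(\mathrm{rest})\ge 2$. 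Therefore $m<n$ and $p_g=\dim A^n=0$.

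The step I expect to be the crux is the passage from Hodge theory to the elliptic sub-model: one needs not merely that the minimal model carries a Hodge structure, but that $d$ is strictly type-preserving and that the homotopy Hodge structures are effective ($a,b\ge 0$), for this is exactly what isolates $A$ as the cohomology of $\Lambda W$ rather than as an uncontrolled subquotient. Granting this, (2) is immediate from Halperin duality, and the only remaining subtlety in (1) is the ellipticity of the fibre $F$; I would establish it either by the two-out-of-three principle as above, or directly by checking that the odd generators truncating the powers of each even generator of $\mathrm{rest}$ again have both Hodge indices positive, and so lie in $\mathrm{rest}$.
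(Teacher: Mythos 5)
Your proof of part (2) is essentially the paper's own: you pass to the sub-cdga $\wedge(\oplus_a V^{a,0})$ of the bigraded complex minimal model, identify its cohomology with $\oplus_p H^{p,0}(X)$ using that $d$ preserves type and that types are effective, observe it is elliptic, and invoke Poincar\'e duality of elliptic Sullivan algebras. Your part (1) also follows the paper's bookkeeping: additivity of the formal-dimension functional over $V = W \oplus \bar W \oplus \mathrm{rest}$, conjugation giving $c(W)=c(\bar W)=m$, and $c(\mathrm{rest})>0$ from the K\"ahler class of type $(1,1)$. The gap is the step where you claim the quotient model $(\wedge(\mathrm{rest}), \bar d)$ is elliptic. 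The ``two-out-of-three principle'' you invoke is false in the direction you use it: base and total space elliptic do \emph{not} imply the fibre is elliptic. The path fibration $\Omega S^3 \to PS^3 \to S^3$ has contractible (hence elliptic) total space and elliptic base, while the fibre has minimal model $(\wedge(y_2),0)$ with infinite-dimensional cohomology; the same example shows that finite-dimensionality of $H^*(F)$ cannot be extracted from the Serre spectral sequence of the fibration either. The only valid direction of ``two-out-of-three'' is: fibre and base elliptic imply total space elliptic.

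What rescues the argument---and this is exactly the ingredient the paper supplies---is that your fibration is not arbitrary: the total-space model $(\wedge V,d)$ is \emph{minimal}, and $(\wedge(\mathrm{rest}),\bar d)$ is its quotient by the $d$-stable ideal generated by the closed generating subspace $W \oplus \bar W$; equivalently, the fibre inclusion is injective on rational homotopy (the connecting homomorphism vanishes), which is precisely what fails in the path-fibration counterexample. One then applies the Mapping Theorem: the surjection of Sullivan algebras $(\wedge V,d) \to (\wedge(\mathrm{rest}),\bar d)$ gives $\mathrm{cat}(\wedge(\mathrm{rest}),\bar d) \le \mathrm{cat}(\wedge V,d) < \infty$ (\cite[Theorem 29.5]{FHT}), and a minimal Sullivan algebra with finitely many generators and finite LS-category has finite-dimensional cohomology (\cite[Proposition 32.4]{FHT}), hence is elliptic. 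Your fallback suggestion---checking that ``the odd generators truncating the powers of each even generator of $\mathrm{rest}$'' again lie in $\mathrm{rest}$---is not a proof: elliptic models need not have that pure truncation shape, and the existence of such truncating generators is tantamount to the ellipticity you are trying to establish. With the Mapping Theorem substituted for the two-out-of-three claim, your argument coincides with the paper's proof.
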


\begin{remark}
Note that the geometric genus is a birational invariant. Therefore Theorem \ref{thm:dual} (1) also applies to compact K\"ahler manifolds which are birationally equivalent to a rationally elliptic one.
\end{remark}

\begin{corollary}\label{cor:chern}
Let $X$ be a compact K\"ahler manifold  with elliptic homotopy type. Then
\begin{enumerate}
\item $c_1(X) \ne 0$;
\item If $X$ is monotone and $H^p(X, \mathcal O_X) \ne 0$ for some $p>0$, then $X$ is of general type.
\end{enumerate}
\end{corollary}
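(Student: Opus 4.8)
The plan is to deduce both parts from Theorem \ref{thm:dual}(1), which supplies the vanishing $p_g(X)=0$, combined with the structure theory of compact K\"ahler manifolds whose first Chern class vanishes.

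For part (1) I would argue by contradiction. Suppose $c_1(X)=0$ in $H^2(X;\mathbb R)$, so that the canonical bundle $K_X$ is numerically trivial. Since a manifold of elliptic homotopy type is simply connected, I can invoke Beauville's decomposition theorem: a simply-connected compact K\"ahler manifold with $c_1=0$ is biholomorphic to a finite product $\prod_i Y_i \times \prod_j Z_j$ of Calabi--Yau manifolds $Y_i$ and irreducible holomorphic-symplectic (hyperk\"ahler) manifolds $Z_j$. (If one only assumes $\pi_1(X)$ finite, one first passes to the universal cover $\w X$, which is again rationally elliptic with $c_1(\w X)=0$, and argues there.) Each factor has geometric genus $1$: a Calabi--Yau manifold carries a nowhere-vanishing holomorphic volume form, and the top exterior power of the holomorphic symplectic form trivializes the canonical bundle of a hyperk\"ahler manifold. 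Because $p_g$ is multiplicative under products (the only surviving K\"unneth summand for the top piece $H^{n,0}$ is the tensor product of the top holomorphic forms of the factors), one gets $p_g(X)=\prod_i p_g(Y_i)\prod_j p_g(Z_j)=1$. As $X$ has positive dimension the product is non-empty, so this contradicts $p_g(X)=0$ from Theorem \ref{thm:dual}(1). Hence $c_1(X)\ne 0$.

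For part (2), write the monotonicity hypothesis as $c_1(X)=\lambda[\omega]$ for a K\"ahler class $[\omega]$ and a real constant $\lambda$. By part (1) we have $c_1(X)\ne 0$, hence $\lambda\ne 0$. If $\lambda>0$, then $K_X^{-1}$ has a K\"ahler, hence ample, first Chern class, so $X$ is Fano; applying Kodaira vanishing to $K_X\otimes K_X^{-1}=\mathcal O_X$ then forces $H^p(X,\mathcal O_X)=0$ for every $p>0$, contradicting the assumption that $H^p(X,\mathcal O_X)\ne 0$ for some $p>0$. Therefore $\lambda<0$, which means $c_1(K_X)=-\lambda[\omega]$ is a positive multiple of a K\"ahler class. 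Consequently $K_X$ is ample, $X$ is projective, and $X$ is of general type.

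The main obstacle is the input required for part (1): one must exclude the possibility that $K_X$ is merely numerically trivial without being honestly trivial, since a non-trivial torsion or $\mathrm{Pic}^0$ canonical bundle would have no sections and hence would not by itself contradict $p_g=0$. This is exactly what the Beauville--Bogomolov structure theorem resolves, because the finite \'etale cover it produces has genuinely trivial canonical bundle and a product decomposition with $p_g=1$. The only point that needs care is ensuring that the cover (or $X$ itself) remains rationally elliptic, so that Theorem \ref{thm:dual}(1) may be applied to it; under the standing simple-connectivity assumption this is immediate, and the remaining steps are formal.
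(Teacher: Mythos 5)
Your argument is correct, and part (2) coincides with the paper's proof essentially word for word. For part (1), however, you take a genuinely heavier route than the paper does. The paper observes that a simply connected compact K\"ahler manifold with $c_1(X)=0$ already has \emph{trivial} canonical bundle: since $H^1(X;\mathcal O_X)=0$ one has $\mathrm{Pic}^0(X)=0$, and $H^2(X;\mathbb Z)$ is torsion-free, so $c_1\colon \mathrm{Pic}(X)\to H^2(X;\mathbb Z)$ is injective and $c_1(K_X)=0$ forces $K_X\cong\mathcal O_X$; then $p_g(X)=h^0(K_X)=h^0(\mathcal O_X)=1$, contradicting Theorem \ref{thm:dual}(1). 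The ``obstacle'' you flag --- that $K_X$ might be numerically trivial without being honestly trivial --- is therefore moot under the standing simple-connectivity hypothesis, and the Beauville--Bogomolov decomposition (which rests on Yau's theorem) is not needed. Your version is still valid and has the minor virtue of working when $\pi_1$ is only assumed finite, but for the statement as given the elementary Picard-group argument suffices and is what the paper uses.
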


Finally we identify rationally elliptic spaces among complete intersections and smooth Fano weighted complete intersections.

\begin{Theorem}\label{thm:ci}
The only complete intersections with elliptic homotopy type are the projective space $\mathbb P^n$ and the quadric $V(2)$. Besides $\mathbb P^n$ and $V(2)$, there are no more smooth well formed Fano weighted complete intersections with elliptic homotopy type. 
\end{Theorem}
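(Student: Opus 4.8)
The plan is to first reduce to the Fano case using the results already established, and then to analyze the middle-dimensional cohomology separately in even and odd complex dimension. For an (ordinary) complete intersection $X=X_{d_1,\dots,d_r}\subset \mathbb P^{n+r}$ of complex dimension $n$, the canonical bundle is $K_X=\mathcal O_X(\sum d_i-n-r-1)$. If $\sum d_i=n+r+1$ then $c_1(X)=0$, contradicting Corollary \ref{cor:chern}(1); if $\sum d_i>n+r+1$ then $K_X=\mathcal O_X(e)$ with $e\ge 1$, and since complete intersections are projectively normal the restriction $H^0(\mathbb P^{n+r},\mathcal O(e))\to H^0(X,K_X)$ is onto a nonzero space, whence $p_g(X)>0$, contradicting Theorem \ref{thm:dual}(1). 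So $X$ is Fano, $\sum d_i\le n+r$. For $n\le 3$ the assertion is contained in \cite{AB}, so I may assume $n\ge 4$. By the Lefschetz hyperplane theorem and Poincar\'e duality the Betti numbers of $X$ agree with those of $\mathbb P^n$ away from the middle degree $n$, where $b_n$ is the only free parameter; in particular every odd Betti number vanishes except possibly $b_n$ when $n$ is odd.

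For $n$ even one has $\chi(X)=n+b_n>0$, so $X$ is positively elliptic and $H^*(X;\mathbb Q)$ is a graded complete intersection algebra. Since $n\ge 4$, Lefschetz gives $H^2(X;\mathbb Q)=\mathbb Q\langle h\rangle$ with $h^i\ne 0$ for $i\le n/2$, so the only algebra generators are $h$ (degree $2$) and the $b_n-1$ primitive middle classes (degree $n$), each annihilated by $h$. Writing down the forced relations and comparing with the Poincar\'e series of a complete intersection algebra, I will show that the $b_n-1$ relations $h\cdot p_i=0$ in degree $n+2$ together with the $\binom{b_n}{2}$ relations $p_ip_j\in\mathbb Q\, h^n$ in degree $2n$ exceed the number permitted by a regular sequence of length $b_n$ as soon as $b_n\ge 3$; hence $b_n\le 2$. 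Finally, Hirzebruch's formula for the primitive middle Betti number shows that $b_n=1$ forces $X=\mathbb P^n$ and $b_n=2$ forces $X=V(2)$, the quadric being the unique complete intersection with one-dimensional primitive middle cohomology.

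For $n$ odd one has $\chi(X)=(n+1)-b_n\ge 0$, so $b_n\le n+1$. If $\chi>0$ then $X$ carries no odd rational cohomology, forcing $b_n=0$, so $X$ is a rational cohomology $\mathbb P^n$; among complete intersections this happens only for $\mathbb P^n$ and the odd quadric $V(2)$. The boundary case $\chi=0$, i.e.\ $b_n=n+1$, is the main obstacle: it is realized precisely by the smooth intersection of two quadrics $V(2,2)\subset\mathbb P^{n+2}$, whose middle Betti number is $n+1$, and I must show that such $X$ is rationally hyperbolic. Its cohomology ring is $\mathbb Q[h]/(h^{n+1})$ together with an $(n+1)$-dimensional space $H^n$ annihilated by $h$ and carrying a nondegenerate skew pairing $H^n\otimes H^n\to H^{2n}$. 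I will argue, via the bigraded minimal model, that $H^n$ with its nondegenerate pairing and the vanishing $h\cdot H^n=0$ forces a free sub-Lie-algebra in $\pi_*(\Omega X)\otimes\mathbb Q$ — the cohomology is of connected-sum type $\#_k(S^n\times S^n)$ modulo the $h$-tower, with $k=(n+1)/2\ge 3$ — so that $\dim\pi_*(X)\otimes\mathbb Q=\infty$, contradicting ellipticity.

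For smooth well-formed Fano weighted complete intersections the Fano hypothesis is part of the statement, and a Lefschetz-type theorem again shows that the Betti numbers coincide with those of $\mathbb P^n$ off the middle degree, so the same even/odd dichotomy and numerical constraints $b_n\le 2$ (for $n$ even) and $b_n=0$ (for $n$ odd, $\chi>0$) apply verbatim. I will then use the classification and tabulated Hodge numbers of smooth well-formed Fano weighted complete intersections available in the literature (e.g.\ the work of Przyjalkowski and Shramov) to verify that these constraints are met only by $\mathbb P^n$ and $V(2)$, together with the hyperbolicity argument of the previous paragraph to exclude the weighted analogues of the intersection of two quadrics. I expect this odd-dimensional $\chi=0$ case to be the genuine difficulty throughout, since it is the only family not already excluded by the Euler characteristic or by the complete-intersection-algebra structure, and it must instead be ruled out by a direct rational-homotopy (hyperbolicity) computation.
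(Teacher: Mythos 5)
Your route is genuinely different from the paper's, and most of it can be made to work. The reduction to the Fano range via Theorem \ref{thm:dual}(1) and Corollary \ref{cor:chern}(1) is correct (the paper never needs it, though it usefully makes your later Betti-number checks finite), and your even-dimensional argument is sound: $\chi(X)>0$ makes $H^*(X;\mathbb Q)$ a polynomial algebra modulo a regular sequence, the algebra generators are $h$ and the $b_n-1$ primitive middle classes, and since a regular sequence of length equal to the number of generators must minimally generate the relation ideal, the count $(b_n-1)+\binom{b_n}{2}\le b_n$ forces $b_n\le 2$. The paper reaches the same bound faster from the exponent inequality $\sum_i a_i\le n$ (Theorem \ref{thm:exp}(2)), which gives $\dim V^n\le 1$, and then finishes via the Hodge level and Rapoport's table \cite{Ra}; your substitute for that table (Hirzebruch's formulas identifying the complete intersections with $b_n\le 2$, resp.\ $b_n\in\{0,n+1\}$) is an acceptable, if unexecuted, classification input, and your weighted case does reduce to \cite{PS20} essentially as in the paper once the numerical constraints are rephrased as saying the middle Hodge structure is diagonal.

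The genuine gap is exactly where you flagged it: excluding the odd-dimensional $V(2,2)$ (the $\chi=0$ case). Asserting that the cohomology is ``of connected-sum type $\#_k(S^n\times S^n)$ modulo the $h$-tower'' and therefore $\pi_*(\Omega X)\otimes\mathbb Q$ contains a free Lie subalgebra is not an argument: there is no theorem allowing you to infer hyperbolicity from the cohomology ring containing, or surjecting onto, a connected-sum-like algebra, and killing the ideal $(h)$ does not correspond to any map of spaces you could transfer homotopy along. The gap, however, closes with tools already in the paper and with far less than a hyperbolicity computation. Since $X$ is K\"ahler it is formal; by the Lefschetz theorem every class in $H^n(X;\mathbb Q)$ is indecomposable (all lower cohomology is even and generated by $h$), so Lemma \ref{lem:ciso} gives $\dim\ker(d|_{V^n})=n+1$. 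Hence the minimal model has at least $n+1$ odd generators of degree $n$, i.e.\ at least $n+1$ exponents $b_j=(n+1)/2$, and ellipticity would force, by Theorem \ref{thm:exp}(3), $n(n+1)\le\sum_j(2b_j-1)\le 4n-1$, which fails for all $n\ge 3$. (Equivalently, $\sum_j b_j\ge (n+1)^2/2\ge 2n-1$, so Lemma \ref{lem:betti} makes the odd Betti numbers vanish, contradicting $b_n=n+1$.) This is in effect what the paper's Proposition \ref{prop:lambda} accomplishes in general, and it is the step your proposal is missing; without it, the odd case --- and with it the weighted case, whose only residual candidate after \cite[Theorem 1.7]{PS20} is this same $V(2,2)$ --- remains unproven.
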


\begin{remark}
The rational homotopy groups of complete intersections were characterized in \cite{Ne79}.
\end{remark}

The rational homotopy type of spaces has nice algebraic models (\cite{Quillen}, \cite{Sullivan}). The main idea of this paper is to analyze the algebraic models under the constraints provided by the K\"ahler package. We assume the reader is familiar with the fundamental facts about K\"ahler manifolds, and summarize the preliminaries of rational homotopy theory  in \S \ref{sec:app}.

\

\noindent \textbf{Acknowledgement.} We would like to thank Yifei Chen, Ming Fang, Baohua Fu and Ping Li for helpful discussions. 

\section{Preliminaries}\label{sec:app}
As the preliminaries for the proofs of the results in this paper, we summarize in this section the basic concepts and main results of rational homotopy theory, especially those related to elliptic spaces. For detailed information we refer to the books \cite{FOT} and \cite{FHT}.

\subsection{Minimal models and exponents}\label{sec:app1}
Let $V=\oplus_{i \ge 2} V^i$ be a graded vector space over $\mathbb Q$, $\wedge V$ be the tensor product of the polynomial algebra on $Q=V^{\mathrm{even}} = \oplus_{i} V^{2i}$ and the exterior algebra on $P=V^{\mathrm{odd}}=\oplus_{i} V^{2i+1}$, $d \colon \wedge V \to \wedge V$ be a linear map satisfying $d \circ d=0$ and $d(ab)=da \cdot  b + (-1)^{|a|} a \cdot db$. Then $(\wedge V,d)$ forms a  commutative graded differential algebra (cdga for short). A cdga $(\wedge V,d)$ is a \emph{minimal Sullivan algebra} if there is an ordering $<$ on a homogeneous basis $u_i$ such that $d(u_i) \in \wedge(u_j, j < i)$, and 
$d(V) \subset \wedge^{\ge 2} V$.  A minimal Sullivan algebra $(\wedge V,d)$ is \emph{elliptic} if both $V$ and $H^*(\wedge V,d)$ are finite dimensional. An elliptic minimal Sullivan algebra satisfies Poincar\'e duality, i.~e., $H^p(\wedge V,d) \cong H^{m-p}(\wedge V,d)$ where $m = \mathrm{max} \{ k \ | \ H^k(\wedge V,d) \ne 0 \} $  is the formal dimension of $(\wedge V,d)$. 

The rational homotopy theory of a simply-connected space $X$ is completely encoded in its \emph{rational minimal model} $(\wedge V, d)$. This is a minimal Sullivan algebra with $V_i \cong \mathrm{Hom}(\pi_i(X), \mathbb Q)$, and there is a quasi-isomorphism between $(\wedge V,d)$ and the cdga $(A_{\mathrm{PL}}(X), d)$, i.~e., a cdga map $\varphi \colon (\wedge V,d) \to A_{\mathrm{PL}}(X), d)$ such that 
$$\varphi_* \colon H^*(\wedge V,d) \to H^*(A_{\mathrm{PL}}(X), d)=H^*(X;\mathbb Q)$$ 
is an isomorphism.  The space $X$ is \emph{(rationally) elliptic} if its minimal model $(\wedge V,d)$ is elliptic.

Two spaces are rational homotopy equivalent if there is a continuous map between them inducing isomorphisms between the rational homotopy groups. This is equivalent to that there is an isomorphism between their rational minimal models. The real minimal model of $X$ is $(\wedge V,d) \otimes \mathbb R$. Two spaces are real homotopy equivalent if their real minimal models are isomorphic. 

Let $(\wedge V,d)$ be an elliptic minimal Sullivan, choose homogeneous basis elements $x_1, \cdots , x_q$ of $V^{\mathrm{even}}$, and $y_1, \cdots , y_r$ of $V^{\mathrm{odd}}$, denote the degree of $x_i$ by $2a_i$, and the degree of $y_j$ by $2b_j-1$. The sequence of integers $(a_1, \cdots, a_q; b_1, \cdots, b_r)$ is call the \emph{exponents} of $(\wedge V, d)$. If we arrange the exponents by descending order $a_1 \ge a_2 \ge \cdots \ge a_q$, $b_1 \ge b_2 \ge \cdots b_r$, then there are  inequalities 
\begin{equation}\label{eq:1}
 b_i \ge 2a_i,   \ \ \ \  1 \le i \le q.
 \end{equation}
 Furthermore, the exponents satisfy the following equalities and inequalities (\cite{FH79}). 

\begin{Theorem}\label{thm:exp}
The exponents of a simply connected, elliptic homotopy type minimal Sullivan algebra $(\wedge V,d)$ satisfy 
\begin{enumerate}
\item  $\dim V^{\mathrm{even}} = q \le r = \dim V^{\mathrm{odd}}$; 
\item $\sum_{i=1}^q 2a_i \le m$;
\item $\sum_{j=1}^r (2b_j-1) \le 2m-1$;
\item $m= 2(\sum_{j=1}^r b_j - \sum_{i=1}^a a_i)-(r-q)$;
\item $q=r$ if and only if all the odd Betti numbers vanish.
\end{enumerate}
when $m$ is the formal dimension of $(\wedge V,d)$.
\end{Theorem}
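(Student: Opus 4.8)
The plan is to reduce to the \emph{pure} case and then invoke commutative algebra. Recall that the \emph{associated pure Sullivan algebra} $(\wedge V, d_\sigma)$ is obtained by keeping only the component of $d$ that sends $V^{\mathrm{odd}}$ into $\wedge V^{\mathrm{even}}$, and setting $d_\sigma = 0$ on $V^{\mathrm{even}}$. First I would recall Halperin's theorem that, for $(\wedge V,d)$ elliptic, the algebra $(\wedge V, d_\sigma)$ is again elliptic \emph{with the same formal dimension} $m$; this is proved by filtering $\wedge V$ by even wedge-degree and running the resulting odd spectral sequence, whose initial page is $H^*(\wedge V, d_\sigma)$ and which converges to $H^*(\wedge V,d)$, both being Poincar\'e duality algebras so that their top degrees agree. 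Since $q = \dim V^{\mathrm{even}}$ and $r = \dim V^{\mathrm{odd}}$ are unchanged by purification, it suffices to prove the five assertions for the pure algebra.

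In the pure case $\wedge V^{\mathrm{even}} = \mathbb Q[x_1, \dots, x_q]$ is a polynomial ring and each $P_j := d_\sigma y_j$ is a homogeneous polynomial of degree $2b_j$, so that $(\wedge V, d_\sigma)$ is the Koszul complex of the sequence $P_1, \dots, P_r$ in $\mathbb Q[x_1,\dots,x_q]$. Finite-dimensionality of the cohomology forces $\mathbb Q[x]/(P_1,\dots,P_r)$ to be finite-dimensional, hence the $P_j$ generate an ideal of finite codimension; since at least $q$ elements are needed to cut the polynomial ring down to Krull dimension $0$, we get $r \ge q$, which is (1). The structure theorem of Friedlander--Halperin then lets one choose the basis so that $P_1, \dots, P_q$ form a regular sequence while $P_{q+1}, \dots, P_r$ lie in the ideal $(P_1, \dots, P_q)$; consequently $H^*(\wedge V, d_\sigma) \cong \big(\mathbb Q[x]/(P_1,\dots,P_q)\big) \otimes \wedge(y_{q+1}, \dots, y_r)$, a tensor product of a complete intersection and an exterior algebra. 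Its top degree is the socle degree $\sum_{i=1}^q (2b_i - 2a_i)$ of the complete intersection plus $\sum_{j=q+1}^r (2b_j - 1)$, which rearranges to $m = 2\big(\sum_{j} b_j - \sum_i a_i\big) - (r-q)$, giving (4). I expect the purification step together with this structure theorem to be the main obstacle, since they carry all the rational-homotopy and commutative-algebra content; once they are in place the rest is bookkeeping.

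The remaining items are elementary consequences of (4) and \eqref{eq:1}. For (2), rewrite $m = \sum_{i=1}^q (2b_i - 2a_i) + \sum_{j=q+1}^r (2b_j - 1)$; the second sum is nonnegative and \eqref{eq:1} gives $2b_i - 2a_i \ge 2a_i$, so $m \ge \sum_{i=1}^q 2a_i$. For (3), a direct computation using (4) reduces $2m - 1 - \sum_{j=1}^r(2b_j - 1) \ge 0$ to the inequality $2\sum_{j=1}^r b_j - 4\sum_{i=1}^q a_i - r + 2q - 1 \ge 0$; bounding $2\sum_j b_j - 4\sum_i a_i \ge 2\sum_{j>q} b_j \ge 2(r-q)$ via \eqref{eq:1} and $b_j \ge 1$ leaves $r - 1 \ge 0$. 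Finally (5) reads off the structure of $H^*(\wedge V, d_\sigma)$: if $q = r$ the cohomology is the evenly graded complete intersection $\mathbb Q[x]/(P_1,\dots,P_q)$, so all odd Betti numbers vanish, whereas if $q < r$ the surviving exterior generators $y_{q+1}, \dots, y_r$ produce nonzero odd cohomology; equivalently, the Euler characteristic is positive exactly when the homotopy Euler characteristic $q-r$ vanishes.
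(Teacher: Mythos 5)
First, note that the paper does not actually prove Theorem \ref{thm:exp}: it is quoted from Friedlander--Halperin \cite{FH79} (see also \cite[\S 32]{FHT}), so you are supplying a proof where the paper supplies only a citation. Your reduction to the associated pure algebra and your proof of (1) via Krull's height theorem are sound, and (2) and (3) are indeed routine arithmetic consequences of (4) together with the inequality \eqref{eq:1}. The problem is the step carrying all the weight, namely the claimed ``structure theorem'' that one can choose the basis of $V^{\mathrm{odd}}$ so that $P_1,\dots,P_q$ is a regular sequence with $P_{q+1},\dots,P_r\in(P_1,\dots,P_q)$, whence $H^*(\wedge V,d_\sigma)\cong\bigl(\mathbb Q[x_1,\dots,x_q]/(P_1,\dots,P_q)\bigr)\otimes\wedge(y_{q+1},\dots,y_r)$. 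This is false. Take $q=2$, $r=3$, $V^2=\langle x_1,x_2\rangle$, $V^3=\langle y_1,y_2,y_3\rangle$ with $dy_1=x_1^2$, $dy_2=x_1x_2$, $dy_3=x_2^2$. The ideal generated by the $P_j$ is $\mathfrak m^2$, which has finite codimension, so the algebra is elliptic; but $\mathfrak m^2$ is not a complete intersection, and no two linear combinations of the three quadrics can generate it (in polynomial degree $2$ the ideal $(P_1',P_2')$ is just the $2$-dimensional span of $P_1'$ and $P_2'$, while $\mathfrak m^2$ is $3$-dimensional there). Your tensor decomposition would give total Betti number $2\cdot 4=8$, whereas the actual cohomology is $\mathbb Q,\ \mathbb Q^2,\ \mathbb Q^2,\ \mathbb Q$ in degrees $0,2,5,7$, of total dimension $6$.

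Formula (4) does hold in this example ($m=2(6-2)-1=7$), but your derivation of it --- and of (5) --- collapses together with the structure theorem. What is true (and what the paper itself uses later) is only that a suitable $q$-element subfamily, after a change of basis, forms a regular sequence; the remaining $P_j$ need not lie in the ideal it generates. The genuine proofs of (4) and (5) in \cite{FH79} and \cite[\S 32]{FHT} go through the Gorenstein/Poincar\'e-series argument for the pure model (equivalently, locating the top class of $(\wedge V,d_\sigma)$), not through a complete-intersection splitting; and the equivalence in (5) between $q=r$, positivity of the Euler characteristic, and vanishing of odd cohomology is itself a nontrivial theorem that your last sentence asserts rather than proves. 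To repair the argument you would need to substitute that machinery for the false lemma.
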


\subsection{Pure models}\label{sec:app2}
A minimal Sullivan algebra $(\wedge V,d)$ is \emph{pure} if 
$$d(V^{\mathrm{even}})=0 \ \ \textrm{and} \ \  d(V^{\mathrm{odd}}) \subset \wedge V^{\mathrm{even}}.$$ 
If one assumes further  $\dim V^{\mathrm{even}} = \dim V^{\mathrm{odd}}=k$, let $x_1, \cdots , x_k$ be a basis of $V^{\mathrm{even}}$, $y_1, \cdots, y_k$ be a basis of $V^{\mathrm{odd}}$, then $(\wedge V,d)$ is elliptic (i.~e.~$\dim H^*(\wedge V,d) < \infty$) if and only if $dy_1, \cdots ,dy_k$ is a regular sequence in $\mathbb Q[x_1, \cdots, x_k]$, i.~e., $dy_1$ in not a zero divisor in 
$\mathbb Q[x_1, \cdots, x_k]$ (which is automatically true), and $dy_i$ is not a zero divisor in 
$$\mathbb Q[x_1, \cdots, x_k]/(dy_1, \cdots, dy_{i-1})$$
for $i=2, \cdots, k$. In this case there is an isomorphism 
$$H^*(\wedge V,d) \cong \mathbb Q[x_1, \cdots, x_k]/(dy_1, \cdots, dy_k).$$ 
In general, for a minimal Sullivan algebra $(\wedge V,d)$,  there is an associated pure minimal Sullivan algebra $(\wedge V, d_{\sigma})$, where $d_{\sigma}$ is defined by 
$$d_{\sigma} (V^{\mathrm{even}})=0, \ \ d_{\sigma} \colon V^{\mathrm{odd}} \to \wedge V \to \wedge V^{\mathrm{even}},$$
where $\wedge V \to \wedge V^{\mathrm{even}}$ is the natural projection. There is a spectral sequence called the odd spectral sequence relating $H^*(\wedge V, d_{\sigma})$ to $H^*(\wedge V,d)$. As a consequence, $(\wedge V,d_{\sigma})$ and $(\wedge V,d)$ have the same cohomological dimension, therefore $(\wedge V,d)$ is elliptic if and only if $(\wedge V, d_{\sigma})$ is elliptic.  

\subsection{Formality}\label{sec:app3}
A minimal Sullivan algebra $(\wedge V, d)$ is \emph{formal} if it is quasi-isomorphic to its cohomology algebra with trivial differential $(H^*(\wedge V,d),0)$. In other words, there is a cdga map $(\wedge V,d) \to (H^*(\wedge V,d),0)$ inducing an isomorphism on cohomology. A space $X$ is \emph{formal} if its minimal model is formal.  Examples of formal spaces include spheres, Lie groups and compact K\"ahler manifolds. An algebraic criterion for formality is the following

\begin{Proposition}\label{prop:formal}
A minimal cdga $(\wedge V, d)$ is formal if and only if $V$ decomposes as a direct sum $V = C \oplus N$, with $dC=0$ and $d$ injective on $N$ such that every closed element in the ideal generated by $N$ is exact. 
\end{Proposition}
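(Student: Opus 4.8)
The plan is to prove the two implications separately. First I would note that the conditions $dC=0$ and $d$ injective on $N$, together with $V=C\oplus N$, force $C=\ker d\cap V$ to be exactly the space of closed generators and $N$ to be an arbitrary complement; so the only genuine content is the ideal condition, and in the backward implication we are free to choose $N$.

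For the direction asserting that the decomposition implies formality, I would build the quasi-isomorphism $(\wedge V,d)\to (H^*(\wedge V,d),0)$ explicitly. Let $\rho\colon \wedge V\to \wedge C$ be the algebra map equal to the identity on $C$ and zero on $N$, i.e.\ the reduction modulo the ideal $(N)$, and let $\kappa\colon \wedge C\to H^*(\wedge V,d)$ send a product of closed generators to its cohomology class (well defined since every element of $\wedge C$ is $d$-closed). Set $\varphi=\kappa\circ\rho$. The key observation is that for any $\alpha$ the element $\alpha-\rho(\alpha)$ lies in $(N)$; applying this to $\alpha=d\omega$, both $d\omega$ and $\rho(d\omega)$ are closed, so their difference is a closed element of $(N)$, hence exact by hypothesis, which gives $\varphi(d\omega)=[\rho(d\omega)]=0$ and shows $\varphi$ is a chain map. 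The same argument applied to an arbitrary closed $z$ yields $[z]=[\rho(z)]$, so $\varphi_*$ is the identity on cohomology; in particular it is an isomorphism and $(\wedge V,d)$ is formal.

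For the converse I would start from a formality quasi-isomorphism $\psi\colon(\wedge V,d)\to (H^*(\wedge V,d),0)$ and set $C=\ker d\cap V$. A preliminary observation is that $\psi$ is injective on $C$: if $c\in C$ and $\psi(c)=0$ then $[c]=\psi_*^{-1}(0)=0$, so $c$ is exact, but minimality ($d(V)\subset\wedge^{\ge 2}V$) forces exact generators to vanish. Granting this, the goal is to produce a complement $N$ of $C$ with $(N)\subseteq\ker\psi'$ for a suitable map $\psi'$; once this holds, any closed $z\in(N)$ satisfies $\psi'(z)=0$, hence $[z]=0$ and $z$ is exact, which is exactly the ideal condition.

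Since $\psi(N_0)$ for a naively chosen complement $N_0$ need not vanish, the step I expect to be the crux is to replace $\psi$ by a homotopic quasi-isomorphism $\psi'$ that vanishes on some complement of $C$. I would do this by an induction over a homogeneous basis compatible with the minimality order $<$ (so that $du_i\in\wedge(u_j:j<i)$), modifying $\psi$ on the non-closed generators by a chain homotopy so as to kill their images while preserving both the chain-map property and the induced map on cohomology. The obstruction at each stage is controlled by the fact that $du_i$ involves only earlier generators together with the automatic exactness of $\psi(du_i)$, and it vanishes precisely because we began with an honest quasi-isomorphism; alternatively one can invoke the bigraded minimal model of a formal space, taking $C=V_0$ and $N=\bigoplus_{i\ge 1}V_i$. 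This normalization is the main obstacle; the remainder of the converse is then immediate from the quasi-isomorphism property exactly as above.
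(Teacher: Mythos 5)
The paper does not prove this proposition: it is stated in the Preliminaries as the classical Deligne--Griffiths--Morgan--Sullivan formality criterion, with the reader referred to \cite{FOT} and \cite{FHT}. So there is no in-paper argument to compare against; I can only assess your proof on its own terms.

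Your forward implication is complete and is the standard argument: the splitting $\wedge V=\wedge C\oplus (N)$ makes $\varphi=\kappa\circ\rho$ an algebra map, and the observation that $d\omega-\rho(d\omega)$ is a closed element of $(N)$, hence exact, simultaneously gives the chain-map property and $\varphi_*=\mathrm{id}$ on cohomology. One small point worth making explicit is that the hypotheses do force $C=\ker d\cap V$ (if $d(c+n)=0$ with $c\in C$, $n\in N$, then $dn=0$, so $n=0$ by injectivity), which you use implicitly when you identify $C$ with the closed generators.

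The converse is where your write-up has a genuine soft spot. The inductive ``modify $\psi$ by a chain homotopy so that it kills a complement of $C$'' step is asserted rather than proved, and it is not innocuous: the real issue is whether $\psi(V^i)\subseteq\psi(C^i)$, i.e.\ whether the class $\psi_*^{-1}(\psi(v))$ of a non-closed generator $v$ can always be represented by a \emph{closed generator} rather than merely by a closed element of $\wedge V$; a homotopy $\psi\mapsto\psi+ h d$ only changes $\psi$ on non-closed elements and one must check degree by degree that the needed correction lands back in $V$. As written this is the crux and it is not discharged. However, your fallback --- invoking the bigraded model $(\wedge Z,d)$ of $(H^*(\wedge V,d),0)$ with $C=Z_0$, $N=\oplus_{k\ge1}Z_k$, $d$ injective on $Z_{\ge1}$, and the structural quasi-isomorphism vanishing on $Z_{\ge1}$ and hence on the ideal $(N)$ --- is the standard complete proof, modulo citing the existence and uniqueness of the bigraded model (Halperin--Stasheff, or \cite{FHT}). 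With that alternative made primary rather than parenthetical, the proof is correct.
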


\subsection{Bigraded models}\label{sec:app4}
Let $(\wedge V,d)$ be the rational minimal model of a space $X$. The  complex minimal model of $X$ is $(\wedge V,d) \otimes \mathbb C$. For a compact K\"ahler manifold $M$, there is a bigradation on the complex minimal model which is compatible with the Hodge structure on $H^*(M;\mathbb C)$. 
\begin{Theorem}(\cite[Theorem 9.29]{FOT})\label{thm:big}
Let $M$ be a compact K\"ahler manifold. Then the complex minimal model $(\wedge V_{\mathbb C},d)$ of $M$ admits a bigradation such that the following properties hold.
\begin{enumerate}
\item Let $V_k$ be the subspace of elements of degree $k$ in $V_{\mathbb C}$. There is a decomposition of $V_k$ into $V_k = C_k \oplus N_k$, with $d|_{C_{k}}=0$, $d$ injective on $N_k$ and 
$$C_k = \oplus_{i+j=k}C_k^{i,j}$$

\item The bigradation is extended multiplicatively to $(\wedge V_{\mathbb C},d)$ and 
$$d(\wedge V_{\mathbb C})^{i,j} \subset (\wedge V_{\mathbb C})^{i,j}, \ \  (\wedge V_{\mathbb C})^{i,j} \cdot  (\wedge V_{\mathbb C})^{r,s} \subset  (\wedge V_{\mathbb C})^{i+r,j+s}.$$

\item The quasi-isomorphism $\rho \colon (\wedge V_{\mathbb C},d) \to (A^c(M),d)$ is compatible with the bigradation $A^c(M) = \oplus_{p,q}A^{p,q}$.

\item The bigradation induced on the cohomology of $(\wedge V_{\mathbb C},d)$ gives the pure Hodge structure of $H^*(M;\mathbb C)$. 
\end{enumerate}
\end{Theorem}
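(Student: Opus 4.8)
The plan is to reduce everything to the \emph{formality} of compact Kähler manifolds and then run a bigraded version of the standard inductive minimal model construction, using the Hodge decomposition of cohomology as the source of the second grading. First I would establish a \emph{bigraded} formality chain. Following Deligne--Griffiths--Morgan--Sullivan, I would work with the complex de Rham algebra $(A_{\mathbb C}(M), d)$ together with the operator $d^c$ and exploit the $\partial\bar\partial$-lemma (equivalently the $dd^c$-lemma), valid on any compact Kähler manifold, to produce a zig-zag of quasi-isomorphisms
$$(A_{\mathbb C}(M), d) \xleftarrow{\ \iota\ } (\ker d^c, d) \xrightarrow{\ \pi\ } (H^*(M;\mathbb C), 0),$$
where $\iota$ is the inclusion and $\pi$ sends a $d^c$-closed form to its $d$-cohomology class. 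The crucial point is that every term carries the Dolbeault bidegree $(p,q)$ and that the $\partial\bar\partial$-lemma lets me represent each class by a bihomogeneous (harmonic) form, so the identification $H^*(M;\mathbb C)=\oplus_{p,q}H^{p,q}$ is respected by the chain; this is what will ultimately yield (3) and the matching with the pure Hodge structure in (4).

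Next, since $(H^*(M;\mathbb C),0)$ is a bigraded commutative algebra whose zero differential trivially has bidegree $(0,0)$ and whose product satisfies $H^{p,q}\cdot H^{r,s}\subset H^{p+r,q+s}$, I would build its minimal model $(\wedge V_{\mathbb C},d)$ by the usual induction on cohomological degree, but making every choice bihomogeneous. At each stage the closed generators surjecting onto the not-yet-hit part of the cohomology are chosen to represent bihomogeneous classes; these constitute $C_k$ and acquire bidegree $(i,j)$ with $i+j=k$, giving $C_k=\oplus_{i+j=k}C_k^{i,j}$. The remaining generators kill bihomogeneous cocycles $z$ of some bidegree $(i,j)$ that map to zero in cohomology: for each such $z$ I adjoin $v$ with $dv=z$, assign $v$ the bidegree $(i,j)$ of $z$ (so here $i+j$ need not equal $\deg v$), and place it in $N_k$. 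Because the cokernels and kernels arising at each stage are themselves bigraded, bihomogeneous bases exist, and extending the bigrading multiplicatively makes $d$ of bidegree $(0,0)$, i.e. $d(\wedge V_{\mathbb C})^{i,j}\subset(\wedge V_{\mathbb C})^{i,j}$ and $(\wedge V_{\mathbb C})^{i,j}\cdot(\wedge V_{\mathbb C})^{r,s}\subset(\wedge V_{\mathbb C})^{i+r,j+s}$. This establishes (1) and (2), and by construction the comparison map to $(H^*(M;\mathbb C),0)$ is bigraded, which gives (4).

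The main obstacle, and the step I would treat most carefully, is upgrading this comparison to the compatibility of $\rho\colon(\wedge V_{\mathbb C},d)\to(A^c(M),d)$ with the bidegree asserted in (3), since on forms the differential $d=\partial+\bar\partial$ is not of pure bidegree. Here I would lift the bihomogeneous choices to the cochain level through the formality chain, using harmonic representatives and the $\partial\bar\partial$-lemma to solve the lifting equations $\rho(dv)=d\,\rho(v)$ within the prescribed bidegrees. The Hodge-theoretic splitting ensures that each obstruction to a bihomogeneous lift is itself a bihomogeneous boundary, hence can be removed without disturbing the lower-degree data already fixed; running this induction simultaneously with the model construction produces a quasi-isomorphism $\rho$ respecting the bigradings and completes the proof.
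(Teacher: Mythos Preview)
The paper does not give its own proof of this theorem: it is stated in the preliminaries as a direct citation of \cite[Theorem 9.29]{FOT} and used as a black box throughout. There is therefore no in-paper argument to compare your proposal against.

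That said, your outline is essentially the standard proof one finds in the cited reference: reduce to formality via the Deligne--Griffiths--Morgan--Sullivan zig-zag $(A_{\mathbb C}(M),d)\leftarrow(\ker d^c,d)\rightarrow(H^*(M;\mathbb C),0)$ coming from the $\partial\bar\partial$-lemma, observe that the target carries the Hodge bigradation with multiplicative compatibility and zero differential, and then run the inductive minimal model construction making every choice bihomogeneous. Your separation of the new generators into closed ones $C_k$ (inheriting bidegree from the Hodge class they hit) and killing ones $N_k$ (inheriting bidegree from the cocycle they bound) is exactly the mechanism that produces (1), (2) and (4).

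You are right to flag (3) as the delicate point. One small caution: your sketch of lifting $\rho$ directly into $(A^c(M),d)$ while keeping bidegree is the place where a naive attempt can go wrong, since $d=\partial+\bar\partial$ does not preserve bidegree on forms. The clean way around this, and the one taken in \cite{FOT}, is to build the bigraded quasi-isomorphism into the intermediate algebra $(\ker d^c,d)$, where the $\partial\bar\partial$-lemma guarantees that $d$-exact $d^c$-closed bihomogeneous forms admit bihomogeneous $d$-primitives in $\ker d^c$; the composite with $\iota$ then gives the compatibility with the bigradation of $A^c(M)$. If you phrase your induction as a lift into $\ker d^c$ rather than into $A^c(M)$ directly, the argument goes through without the obstruction you anticipate.
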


\section{Exponents}
In this section we study the exponents of minimal models of compact K\"ahler manifold with elliptic homotopy type. We use the same notations as in \S \ref{sec:app}.

\begin{Lemma}\label{lem:betti}
Let $X$ be a $2n$-dimensional elliptic space, with exponents $(a_1, \cdots, a_q;b_1, \cdots, b_r)$.
If $\sum_{i=1}^q a_i \ge n-2$ or $\sum_{j=1}^r b_j \ge 2n-1$, then all the odd dimensional Betti numbers of $X$ vanish.
\end{Lemma}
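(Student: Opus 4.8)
The plan is to reduce the statement to a single inequality for the quantity $s := r - q$ and to show that $s = 0$; by Theorem \ref{thm:exp}(5) the odd-dimensional Betti numbers of $X$ vanish precisely when $q = r$, i.e. when $s = 0$. Since $X$ is a $2n$-dimensional elliptic space its formal dimension is $m = 2n$, and I will abbreviate $A = \sum_{i=1}^q a_i$ and $B = \sum_{j=1}^r b_j$.

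First I would extract the arithmetic consequences of Theorem \ref{thm:exp}. Part (1) gives $s \ge 0$, while part (4) with $m = 2n$ reads $2n = 2(B - A) - s$, so that
\[
s = 2(B - A - n), \qquad B = A + n + \tfrac{s}{2}.
\]
In particular $s$ is a \emph{non-negative even} integer; this evenness, which is exactly where the hypothesis that the formal dimension is even enters, is what will let a weak bound collapse $s$ to $0$.

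Next comes the key estimate. Arranging the exponents in descending order and summing the inequalities \eqref{eq:1}, namely $b_i \ge 2a_i$ for $1 \le i \le q$, gives $\sum_{i=1}^q b_i \ge 2A$. Because $X$ is simply connected every odd generator sits in degree $\ge 3$, so $b_j \ge 2$ for all $j$; applying this to the $s$ remaining exponents $b_{q+1}, \ldots, b_r$ yields $\sum_{j=q+1}^r b_j \ge 2s$. Adding the two and substituting $B = A + n + s/2$ produces the crucial bound
\[
A \le n - \tfrac{3}{2}s.
\]

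Finally I would feed in the two hypotheses. If $\sum_i a_i \ge n - 2$, then $n - 2 \le A \le n - \tfrac32 s$ forces $\tfrac32 s \le 2$, hence $s \le \tfrac43$ and, by evenness, $s = 0$. If instead $\sum_j b_j \ge 2n - 1$, then $A = B - n - s/2 \ge n - 1 - s/2$, which combined with $A \le n - \tfrac32 s$ gives $s \le 1$ and again $s = 0$. In either case $q = r$, and Theorem \ref{thm:exp}(5) finishes the proof. The only real subtlety, the \emph{hard part}, is the assembly of the bound $A \le n - \tfrac32 s$: one must play the inequality $b_i \ge 2a_i$, which controls only the largest $q$ odd exponents, against the crude positivity $b_j \ge 2$ of the remaining $s$ of them, and then rely on the parity of $s$ to rule out the borderline value.
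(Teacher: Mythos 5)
Your proof is correct and uses exactly the same ingredients as the paper's: the identity of Theorem \ref{thm:exp}(4) with $m=2n$, the inequalities $b_i \ge 2a_i$ for $i \le q$ and $b_j \ge 2$ for the remaining odd exponents, and the evenness of $r-q$ to rule out the borderline values. The only difference is cosmetic --- you package everything into the single bound $A \le n - \tfrac{3}{2}s$, whereas the paper first derives $\sum_j b_j \le 2n-2$ under the assumption $r>q$ and then treats the two hypotheses by contradiction --- so this is essentially the paper's argument in a slightly cleaner form.
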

\begin{proof}
By Theorem \ref{thm:exp} (4),  $r-q$ is a non-negative even integer, if $r-q >0$, then $r-q \ge 2$. In this case the identity (Theorem \ref{thm:exp} (4)) $$2(\sum_{j=1}^r b_j -\sum_{i=1}^q a_i)-(r-q)=2n$$
together with the fact $b_j \ge 2$  imply 
\begin{eqnarray*}
\sum_{j=1}^r
b_j  & =  & \sum_{i=1}^q a_i + n + (r-q)/2 \\
      & \le & \frac{1}{2}\sum_{i=1}^q b_i +n + (r-q) -1 \\
      & \le & \frac{1}{2}\sum_{i=1}^q b_i + \frac{1}{2} \sum_{i=q+1}^r b_i +n-1 \\
      & = & \frac{1}{2}\sum_{j=1}^r b_j +n-1.
\end{eqnarray*}
From this one gets $\sum_{j=1}^r b_j \le 2n-2$. Therefore if $\sum_{j=1}^r b_j \ge 2n-1$ then $r-q=0$, and all the odd dimensional Betti numbers vanish by Theorem \ref{thm:exp} (5). 

Under the assumption $r-q>0$, the inequality $\sum_{i=1}^q a_i \ge n-2$ implies
$$\sum_{j=1}^r b_j \ge 2 \sum_{i=1}^q a_i + 2(r-q) \ge 2(n-2) + 4 = 2n.$$
By the result in the preceding paragraph one has $r-q=0$, a contradiction. Therefore if $\sum_{i=1}^q a_i \ge n-2$ then $r=q$, and all the odd dimensional Betti numbers of $X$ vanish. 
\end{proof}

\begin{Theorem}\label{thm:betti1}
Let $X$ be a compact K\"ahler manifold with elliptic homotopy type of complex dimension $n$, with exponents $(a_1, \cdots, a_q, b_1, \cdots b_r)$. 
\begin{enumerate}
\item If $\sum_{i=1}^q a_i \ge n-3$ or $\sum_{j=1}^r b_j \ge 2n-2$, then the odd dimensional Betti numbers of $X$ all vanish.

\item If $\sum_{i=1}^q a_i = n$, then $H^*(X;\mathbb Q)$ is generated by $H^2(X;\mathbb Q)$.
\end{enumerate}
\end{Theorem}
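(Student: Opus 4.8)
Throughout write $m=2n$ for the formal dimension, and recall from the K\"ahler package the hard Lefschetz theorem: the K\"ahler class $\omega\in H^2(X;\mathbb Q)$ satisfies $\omega^{\,n}\ne 0$, and more generally $\omega^{\,n-k}\colon H^k(X;\mathbb Q)\to H^{2n-k}(X;\mathbb Q)$ is an isomorphism for $0\le k\le n$. This single input is what upgrades Lemma \ref{lem:betti} to the present statement, so the plan in both parts is to reduce to the extremal configuration allowed by the exponent inequalities and then contradict $\omega^{\,n}\ne 0$.

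For part (1) I would first squeeze the two hypotheses into one case. Assume the odd Betti numbers do not all vanish, so by Theorem \ref{thm:exp}(5) we have $r>q$, hence $r-q\ge 2$. The inequality chain in the proof of Lemma \ref{lem:betti} already gives $\sum_j b_j\le 2n-2$ whenever $r>q$. If $\sum_j b_j\ge 2n-2$ this forces equality $\sum_j b_j=2n-2$; and if instead $\sum_i a_i\ge n-3$, then $\sum_j b_j\ge 2\sum_i a_i+2(r-q)\ge 2n-2$, so again $\sum_j b_j=2n-2$. Tracing the equality cases of that chain, either hypothesis pins down the same extremal data: $r=q+2$, exactly two odd generators of degree $3$ (exponent $b=2$), the inequality \eqref{eq:1} is an equality at every paired index, and $\sum_{i\le q}a_i=n-3$.

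It then remains to show this extremal configuration cannot occur on a compact K\"ahler manifold. The intuition is that two odd degree-$3$ generators consume six units of formal dimension in the odd direction, so that the even cohomology generated by $H^2$ is too short to reach the top class. Concretely, let $A\subseteq H^*(X;\mathbb Q)$ be the subalgebra generated by $H^2$. Since (for $X$ simply connected) $H^2=V^2$ is spanned by the degree-$2$ generators, i.e.\ those with $a_i=1$, of which there are $s\le\sum_{i\le q}a_i=n-3$, the algebra $A$ is a quotient of a polynomial ring on $s$ classes. In the transparent case where the two extra generators are closed and the remaining paired generators all have $a_i=1$, the space is rationally $W\times S^3\times S^3$ with $\dim W=2\sum_{i\le q}a_i=2n-6$; the K\"ahler class lies in $H^*(W)$, and the argument of part (2) below bounds the top degree of $A$ by $2s\le 2n-6<2n$, whence $\omega^{\,n}\in A^{2n}=0$, contradicting hard Lefschetz. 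The \emph{main obstacle} is the general extremal case, where the two degree-$3$ generators interact with the even part through $d$ (their differentials being quadrics in $H^2$) and no $S^3$-factor splits off: one must still bound the socle degree of $A$ strictly below $2n$, and I expect this to require the bigraded model of Theorem \ref{thm:big} — following the Hodge bidegrees of the degree-$3$ generators and of the quadratic relations they impose on $H^2$ — in order to exclude the degenerate possibility that a power of some degree-$2$ class survives into $H^{2n}$.

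For part (2), since $\sum_i a_i=n\ge n-2$, Lemma \ref{lem:betti} gives $q=r$, and Theorem \ref{thm:exp}(4) then forces $\sum_j b_j=2n$; combined with \eqref{eq:1} and $\sum_i 2a_i=2n$ this makes \eqref{eq:1} an equality, so $b_i=2a_i$ for all $i$. As $q=r$ and the associated pure model is elliptic, its cohomology $\mathbb Q[x_1,\dots,x_q]/(d_\sigma y_1,\dots,d_\sigma y_q)$ is concentrated in even degrees, so the odd spectral sequence collapses and $H^*(X;\mathbb Q)\cong\mathbb Q[x_1,\dots,x_q]/(f_1,\dots,f_q)$ is a complete intersection with $\deg x_i=2a_i$, $\deg f_i=4a_i$; moreover each $f_i\in\wedge^{\ge 2}V^{\mathrm{even}}$ has no linear term. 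Suppose some $a_j\ge 2$. Then the degree-$2$ generators $x_S=\{x_i: a_i=1\}$ number $s<n$, and the quadratic relations $f_i$ (those with $a_i=1$) lie in $\mathbb Q[x_S]$; being a subset of a homogeneous regular sequence they may be reordered to the front and, by faithful flatness of $\mathbb Q[x]$ over $\mathbb Q[x_S]$, remain regular in $\mathbb Q[x_S]$, so $\mathbb Q[x_S]/(f_i:a_i=1)$ is an Artinian complete intersection of top degree $2s$. The subalgebra of $H^*(X;\mathbb Q)$ generated by $H^2$ is a quotient of this ring, hence vanishes above degree $2s<2n$; in particular $\omega^{\,n}=0$, contradicting hard Lefschetz. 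Therefore every $a_i=1$, so $q=n$, all even generators sit in degree $2$, and $H^*(X;\mathbb Q)=\mathbb Q[x_1,\dots,x_n]/(f_1,\dots,f_n)$ is generated by $H^2$, as claimed.
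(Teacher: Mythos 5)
Your reduction of part (1) to the extremal configuration ($r=q+2$, $b_i=2a_i$ for $i\le q$, two extra exponents equal to $2$, and $\sum_i a_i=n-3$) is correct and coincides with the first step of the paper's proof. But ruling out that configuration is the actual content of the statement, and your proposal does not do it: you treat only the ``transparent case'' where the two degree-$3$ generators are closed and split off as rational $S^3$-factors (which, as stated, needs more than closedness --- they must also not appear in the differentials of the other generators), and you explicitly defer the general case. That is a genuine gap, and the general case is the typical one, since nothing in the extremal data forces the degree-$3$ generators to be closed. The paper closes it with three moves absent from your sketch. First, formality (Proposition \ref{prop:formal}) gives $P_0=\ker d|_{V^{\mathrm{odd}}}\ne 0$: otherwise $\wedge C\to H^*$ shows $H^*$ is generated by even classes, so the odd Betti numbers vanish and Theorem \ref{thm:exp}(5) forces $r=q$, a contradiction. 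Second, the associated pure model splits as $(\wedge(Q\oplus P_1),d_\sigma)\otimes(\wedge P_0,0)$, and applying the inequalities (\ref{eq:1}) to the elliptic factor $(\wedge(Q\oplus P_1),d_\sigma)$, given the extremal equalities $b_i=2a_i$, forces $P_0$ to be concentrated in degree $3$. Third --- the step you were missing --- the differentials of the degree-$3$ generators in $P_1$ are quadrics in a basis $x_1,\dots,x_k$ of $Q^2$, and being part of a homogeneous regular sequence they remain a regular sequence in $\mathbb Q[x_1,\dots,x_k]$ (flatness); hence $(\wedge(Q^2\oplus P_1^3),d)$ is an elliptic sub-Sullivan algebra of the minimal model (note $d=d_\sigma$ on $P_1^3$ by degree reasons). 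Hard Lefschetz makes $\omega^n$ nonzero in the cohomology of this subalgebra, so its formal dimension is $\ge 2n$, while Theorem \ref{thm:exp}(4) computes that formal dimension as $2\dim Q^2\le 2\sum_i a_i=2(n-3)$, a contradiction. This is exactly the ``socle bound strictly below $2n$'' you hoped for, obtained from the exponent identity applied to a subalgebra rather than from Hodge bidegrees; the bigraded model of Theorem \ref{thm:big} is not needed.

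Your part (2) uses the same mechanism as the paper (restrict the quadratic relations to the degree-$2$ variables by flatness, bound the top degree by $2s<2n$, contradict $\omega^n\ne 0$), but it rests on an unjustified identification: collapse of the odd spectral sequence only identifies the \emph{associated graded} of $H^*(X;\mathbb Q)$ with $\mathbb Q[x_1,\dots,x_q]/(d_\sigma y_1,\dots,d_\sigma y_q)$; that $H^*(X;\mathbb Q)$ itself is a complete intersection on generators of degrees $2a_i$, with relations of degrees $4a_i$ having no linear terms, is Halperin's structure theorem for elliptic spaces of positive Euler characteristic and must be cited or proved (the no-linear-term claim in particular needs $\dim Q(H)=\dim V^{\mathrm{even}}$, which is not automatic from the collapse). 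The paper sidesteps this entirely by running the argument inside the minimal model: since $dy\in\wedge^{\ge 2}V$ by minimality, the degree-$4$ differentials of $P^3$ lie in $\wedge^2 Q^2$, so $(\wedge(Q^2\oplus P^3),d)$ is honestly a sub-cdga in which $\omega^n$ can be tested, forcing $k=q=n$; generation by $H^2$ then follows from formality, because $\wedge C\to H^*$ is onto and $C\subseteq Q^2$ once $Q(H)^{\mathrm{odd}}=0$. If you replace your spectral-sequence assertion by this pure-model argument or by a citation to Halperin's theorem, your part (2) is complete; part (1), however, remains unproved as written.
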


Theorem \ref{thm:hodge} (1) is a corollary of Theorem \ref{thm:betti1} (1), since $H^2(X;\mathbb Q) \ne 0$ for a compact K\"ahler manifold $X$, hence the $a$-exponents of its minimal model are nonempty and $\sum_{i=1}^q a_i \ge 1$.

\begin{proof}[Proof of Theorem \ref{thm:betti1}]
(1) By Lemma \ref{lem:betti} we only need to consider the case $\sum_{j=1}^r b_j = 2n-2$. We shall exclude this possibility. Note that the equality in Lemma \ref{lem:betti}  holds only when $r-q=2$, and in this case the exponents must be 
\begin{equation}\label{eq:ab}
b_1=2 a_1, \cdots , b_q=2 a_q=2, \  b_{q+1}=b_{q+2}=2.
\end{equation}
Let $(\wedge V, d)$ be the minimal model of $X$, denote 
$$Q^{2i}=V^{2i}, \ P^{2i+1}=V^{2i+1}, \ Q=\oplus Q^{2i} = V^{\mathrm{even}}, \ P=\oplus P^{2i+1} = V^{\mathrm{odd}}$$
 then $\dim P^{3} = \dim Q^{2}+2$. 

Compact K\"ahler manifolds are formal (see \S \ref{sec:app3}), by Proposition \ref{prop:formal} there is a decomposition $V=C \oplus N$ 
with $C=\ker d|_{V}$, and a surjection $\wedge C \to H^{*}(\wedge V, d)$.  Let $P_{0}=\ker d|_{P}$. If $P_{0}=0$, then $C=\ker d|_Q$, and $H^{*}(\wedge V, d)$ is generated by classes of even degrees. This implies that the odd dimensional Betti numbers vanish. But then Theorem \ref{thm:exp} (5) implies $r=q$, a contradiction. So one must have  $P_{0} \ne 0$.  

Now consider the associated pure minimal Sullivan algebra $(\wedge V, d_{\sigma})$, which is also elliptic (see \S \ref{sec:app2}). There is a decomposition
$$(\wedge V, d_{\sigma}) = (\wedge(Q \oplus P_{1}), d_{\sigma}) \otimes (\wedge P_{0}, 0),$$
in which $(\wedge(Q \oplus P_{1}), d_{\sigma})$ is an elliptic pure minimal Sullivan algebra with 
$\dim Q=\dim P_{1}=q$. Since the exponents of an elliptic minimal Sullivan algebra satisfy the inequalities (\ref{eq:1}) in \S \ref{sec:app}, and the exponenets of $(\wedge V,d)$ satisfy the equations (\ref{eq:ab}) above,  the vector space $P_0$ must be homogeneous of degree $3$. Let $x_{1}, \cdots, x_{q}$ and $y_{1}, \cdots, y_{q}$ be homogeneous basis of $Q$ and $P_{1}$, respectively,  then $d_{\sigma}y_{1}, \cdots , d_{\sigma}y_{q}$ is a regular sequence in $\mathbb Q[x_{1}, \cdots, x_{q}]$. This implies, if  $x_{1}, \cdots, x_{k}$ and $y_{1}, \cdots, y_{k}$ are basis of $Q^{2}$ and $P_{1}^{3}$, respectively, then $d_{\sigma}y_{1}, \cdots, d_{\sigma}y_{k}$ is a regular sequence in $\mathbb Q[x_{1}, \cdots, x_{k}]$. Therefore $(\wedge (Q^{2} \oplus P_{1}^{3}), d_{\sigma})$ is an elliptic minimal Sullivan algebra. Note that $d=d_{\sigma}$ on $P_1^3$, hence $(\wedge (Q^{2} \oplus P_{1}^{3}), d)$ is an elliptic minimal Sullivan algebra, whose formal dimension, by the k\"ahlerian condition, is $\ge 2n$. From this one deduces further by Theorem \ref{thm:exp} (4) that  $Q =Q^2$ and $P=P^{3}$, hence $(\wedge V ,d) \cong (\wedge (Q^2 \oplus P_1^3) ,d ) \otimes (\wedge P_0,0)$, a contradiction to the assumption that $X$ is K\"ahler.

Under the assumption $r-q>0$, $\sum_{i=1}^q a_i \ge n-3$ implies
$$\sum_{j=1}^r b_j \ge 2 \sum_{i=1}^q a_i + 2(r-q) \ge 2(n-3) + 4 = 2n-2$$
which  by the above discussion implies $r-q=0$, a contradiction. Therefore if $\sum_{i=1}^q a_i \ge n-3$ then $r=q$, hence all the odd dimensional Betti numbers of $X$ vanish.  

(2) By Theorem \ref{thm:exp} (4), and the facts  $b_j \ge 2a_j$, $b_j \ge 2$, if $\sum_{i=1}^q a_i =n$, then $r=q$ and $b_{i}=2a_{i}$ for all $i$. Then the discussion about the pure minimal Sullivan algebra $(\wedge V, d_{\sigma})$ above shows  $Q=Q^{2}$ and $P=P^{3}$. The surjection $\wedge Q^2 \to H^{*}(\wedge V,d)$ implies that  $H^{*}(\wedge V,d)$ is generated by $H^{2}(\wedge V,d)$.
\end{proof}

\begin{remark}
A closed $2n$-manifold $M$ is cohomologically symplectic (c-symplectic for short) if there is a cohomology class $\omega \in H^{2}(M;\mathbb R)$ such that $\omega^{n} \ne 0$. Theorem \ref{thm:betti1} (2) holds for c-symplectic manifolds with elliptic homotopy type. 
\end{remark}

Let $A$ be a graded commutative algebra, let $A^+= \oplus _{i>0} A^i$, define the \emph{indecomposables} of $A$ to be $Q(A)=A/(A^+ \cdot A^+)$  (c.~f.~\cite[p.60]{FOT}). Let $(\wedge V,d)$ be a formal minimal Sullivan algebra. By Proposition \ref{prop:formal}  the vector space $V$ has a decomposition $V=C \oplus N$ with $C=\ker d|_{V}$. Let $Q(H)$ be the indecomposables of  $H^*(\wedge V,d)$.

\begin{Lemma}\label{lem:ciso}
$Q(H)$ is isomorphic to $C$ as vector spaces.
\end{Lemma}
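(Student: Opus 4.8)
The plan is to construct a natural degree-preserving linear map $\phi \colon C \to Q(H)$, where $H = H^*(\wedge V,d)$, and to show it is an isomorphism by treating injectivity and surjectivity separately. Since $C = \ker d|_V$, every $c \in C$ is a cocycle of positive degree, so it represents a class $[c] \in H^+$; composing with the projection $\pi \colon H^+ \to Q(H)$ onto the indecomposables yields $\phi(c) = \pi([c])$. The two halves of the argument draw on the two hypotheses at our disposal: injectivity will use only the minimality of $(\wedge V,d)$, while surjectivity will use formality through Proposition \ref{prop:formal}.

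For injectivity I would exploit the word-length grading $\wedge V = \bigoplus_{k \ge 0} \wedge^k V$. Minimality, i.~e.~$d(V) \subseteq \wedge^{\ge 2}V$, propagates through the Leibniz rule to give $d(\wedge V) \subseteq \wedge^{\ge 2}V$. Suppose $c \in C$ lies in $\ker \phi$, so that $[c]$ is decomposable: $[c] = \sum_i [u_i][w_i]$ with the $u_i, w_i$ cocycles of positive degree, which we may take in $\wedge^{\ge 1}V$. Then $c - \sum_i u_i w_i = d\beta$ for some $\beta \in \wedge V$. Both $\sum_i u_i w_i$ and $d\beta$ lie in $\wedge^{\ge 2}V$, so applying the projection $\wedge V \to \wedge^1 V = V$ annihilates the right-hand side and forces $c = 0$. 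Thus $\phi$ is injective; note that this step is valid for any minimal Sullivan algebra.

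Surjectivity is where formality enters, and I regard it as the main obstacle. By a Nakayama-type principle it suffices to show that the classes $\{[c] : c \in C\}$ generate $H$ as an algebra. Given a cocycle $z$, I would split it along $\wedge V = \wedge C \oplus (N)$, where $(N)$ denotes the ideal generated by $N$, writing $z = z_0 + z_1$ with $z_0 \in \wedge C$ and $z_1 \in (N)$. Because $d$ vanishes on $C$ it vanishes on $\wedge C$, so $dz_0 = 0$ and hence $z_1 = z - z_0$ is a closed element of the ideal generated by $N$. The defining property of formality in Proposition \ref{prop:formal} then makes $z_1$ exact, so $[z] = [z_0]$, and $z_0 \in \wedge C$ exhibits $[z]$ as a polynomial in the classes $[c]$. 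Hence these classes generate $H$, which gives surjectivity of $\phi$ onto $Q(H)$. Combining the two parts shows $\phi \colon C \to Q(H)$ is an isomorphism of graded vector spaces. The delicate point throughout is the bookkeeping of the splitting $z = z_0 + z_1$ and the verification that $z_1$ is both closed and genuinely lies in the ideal generated by $N$, which is exactly what licenses the appeal to formality.
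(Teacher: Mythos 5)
Your proof is correct and follows essentially the same route as the paper's: injectivity comes from minimality via the word-length filtration (a decomposable class plus an exact term lies in $\wedge^{\ge 2}V$, forcing $c=0$), and surjectivity comes from formality, showing $\wedge C \to H^*(\wedge V,d)$ is onto and hence $C$ spans the indecomposables. The only difference is cosmetic: you spell out the splitting $z=z_0+z_1$ along $\wedge V=\wedge C\oplus (N)$ that the paper leaves implicit when it cites Proposition \ref{prop:formal}.
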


\begin{proof}
There is a homomorphism $C \to H^*(\wedge V,d)$, which is injective by the minimality of $(\wedge V,d)$, and  a  homomorphism $\wedge C \to H^*(\wedge V,d)$, which is surjective since any closed element in the ideal generated by $N$ is exact (Proposition \ref{prop:formal}). Let $Q(\wedge C)$ be the space of indecomposables of $\wedge C$, then clearly the map $C \to Q(C)$ is an isomorphism. The surjective homomorphism $\wedge C \to H^*(\wedge V,d)$ induces a surjective homomorphism   $Q(\wedge C) \to Q(H)$. The intersection of the image of $C$ in $H^*(\wedge V,d)$ with $H^+(\wedge V, d) \cdot H^+(\wedge V,d)$ is $0$, since for any $x \in C$, if $[x]=\sum [x_{i_1}] \cdots [x_{i_k}]$, then $x=\sum x_{i_1} \cdots x_{i_k} + dy$ for some $y$ and both $\sum x_{i_1} \cdots x_{i_k} $ and $dy$ are in $\wedge^{\ge 2} V$. This shows that the induced homomorphism $C \to Q(H)$ is an isomorphism.
\end{proof}

\begin{Lemma}\label{lem:ceven}
Let $(\wedge V,d)$ be the minimal model of a compact K\"ahler manifold, then for $k$ odd, $\dim  \ker(d|_{V^k})$ is even.
\end{Lemma}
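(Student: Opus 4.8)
The plan is to read off the parity from the bigradation on the complex minimal model furnished by Theorem \ref{thm:big}, using Hodge symmetry transported to the space of closed generators. First I would pass to the complex minimal model $(\wedge V_{\mathbb C},d)$, where $V_k = V^k \otimes \mathbb C$. Since $d$ is $\mathbb Q$-linear and complexification is flat, $\dim_{\mathbb Q}\ker(d|_{V^k}) = \dim_{\mathbb C}\ker(d|_{V_k})$. By the decomposition $V_k = C_k \oplus N_k$ of Theorem \ref{thm:big}(1), with $d|_{C_k}=0$ and $d$ injective on $N_k$, the kernel is exactly $C_k$, and $C_k = \oplus_{i+j=k} C_k^{i,j}$. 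Hence it suffices to prove the Hodge-type symmetry $\dim C_k^{i,j} = \dim C_k^{j,i}$ for all $i,j$ with $i+j=k$: when $k$ is odd, $i+j=k$ forces $i\ne j$, so the summands group into conjugate pairs of equal dimension, and $\dim C_k$ is even.

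To obtain this symmetry I would transport the problem to cohomology. By Lemma \ref{lem:ciso} the composite of the inclusion $C \hookrightarrow H^*(\wedge V_{\mathbb C},d)$ with the projection to the indecomposables $Q(H)$ is an isomorphism. Because $d$ and the multiplication preserve bidegree (Theorem \ref{thm:big}(2)), a closed element of bidegree $(i,j)$ represents a cohomology class of bidegree $(i,j)$, so this isomorphism identifies $C_k^{i,j}$ with the bidegree-$(i,j)$ part of $Q(H)$. By Theorem \ref{thm:big}(4) the bigradation on $H^*$ is the pure Hodge structure of $H^*(M;\mathbb C)$. The decomposable part $H^+\cdot H^+$ is stable under complex conjugation (which interchanges $(p,q)$ and $(q,p)$ and sends products to products), hence is a sub-Hodge-structure, so the quotient $Q(H) = H^+/(H^+\cdot H^+)$ inherits a Hodge structure and satisfies $\dim Q(H)^{i,j} = \dim Q(H)^{j,i}$. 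Pulling this back along the bidegree-preserving isomorphism gives $\dim C_k^{i,j} = \dim C_k^{j,i}$, which is what remains.

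The main obstacle I anticipate is the bookkeeping that the isomorphism $C \cong Q(H)$ of Lemma \ref{lem:ciso} genuinely respects the bigradation, i.e.\ that a closed bidegree-$(i,j)$ generator maps to a bidegree-$(i,j)$ indecomposable and that conjugation interacts correctly with the minimal-model bigradation; once the compatibilities of Theorem \ref{thm:big} are unwound, the parity count is immediate. An alternative, possibly cleaner, route would be to argue directly that the minimal-model bigradation is conjugation-compatible, coming from a real Hodge structure on $V$, which would yield $\dim C_k^{i,j} = \dim C_k^{j,i}$ without detouring through cohomology; but since that uses a feature of the underlying construction not made explicit in Theorem \ref{thm:big}, I would prefer the cohomological route above.
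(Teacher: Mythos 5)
Your argument is correct and follows essentially the same route as the paper: pass to the complex minimal model, identify $\ker(d|_{V^k})$ with $C_k$, use Lemma \ref{lem:ciso} to transport the question to the indecomposables $Q(H)$, and conclude by the conjugation symmetry of the Hodge bigradation inherited by $Q(H)$. The paper states this more tersely, but your added verification that the isomorphism $C\cong Q(H)$ respects the bigradation and that $H^+\cdot H^+$ is a conjugation-stable bigraded subspace is exactly the bookkeeping implicit in the paper's proof.
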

\begin{proof}
It suffices to work with the complex minimal model. For a compact K\"ahler manifold $M$, the space of indecomposables $Q(H)$ inherits a bigradation from the Hodge decomposition on $H^*(M;\mathbb C)$, and a complex conjugation from the complex conjugation on $H^*(M, \mathbb C)$, compatible with the bigradation. Therefore for $k$ odd, by Lemma \ref{lem:ciso},
$$\dim \ker(d|_{V^k}) = \dim C^k = \dim Q(H)^k$$
is even. 
\end{proof}

\begin{Proposition}\label{prop:lambda}
Let $(\wedge V,d)$ be the minimal model of a compact K\"ahler manifold $X$ with elliptic homotopy type, of complex dimension $n$. Let $\lambda$ be the largest odd integer $k$ such that $\ker(d|_{V^{k}}) \ne 0$. Then $\lambda \le  n-2$. Equality holds only when $n=5$, $r=3$ and $q=1$.
\end{Proposition}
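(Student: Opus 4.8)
The plan is to read $\lambda$ off from the closed odd generators and to play the formal–dimension identity against the fact that $X$, being K\"ahler, is c-symplectic: there is a class $\omega\in H^2(X;\mathbb R)$ with $\omega^n\neq0$. For odd $k$ one has $V^k=P^k$, hence $\ker(d|_{V^k})=P_0^k$, where $P_0=\ker d|_P$ is the space of closed odd generators; thus $P_0^\lambda\neq0$, and by Lemma \ref{lem:ceven} in fact $\dim P_0^\lambda\ge2$. As in the proof of Theorem \ref{thm:betti1} I would pass to the pure model and use the splitting $(\wedge V,d_\sigma)=(\wedge(Q\oplus P_1),d_\sigma)\otimes(\wedge P_0,0)$ with $\dim Q=\dim P_1=q$. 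Writing $m'$ for the formal dimension of the equal-rank factor $(\wedge(Q\oplus P_1),d_\sigma)$, this is a Poincar\'e duality algebra concentrated in even degrees, so $m'$ is a positive even integer, and $m'\ge2$ because $q\ge1$ (as $H^2(X)\neq0$). Since the pure and the genuine model have the same formal dimension $2n$ (\S\ref{sec:app2}), additivity of formal dimension on the two tensor factors gives
\begin{equation*}
2n=m'+\sum_{z\in P_0}\deg z .
\end{equation*}

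From $\dim P_0^\lambda\ge2$ we get $\sum_{z\in P_0}\deg z\ge2\lambda$, so $2n\ge m'+2\lambda\ge2+2\lambda$, i.e. $\lambda\le n-1$ (and $\lambda\ge n$ is already excluded by the same inequality). To rule out $\lambda=n-1$ I would note that it forces $m'=2$, hence $q=1$ with a single even generator $x$ of degree $2$ and one generator $y\in P_1$ of degree $3$. Then $dy$ has degree $4$; since every generator of $P_0$ has degree $\lambda=n-1\ge3$, no term of $dy$ can involve $y$ or $P_0$, so in the genuine differential $dy=x^2$ and $[x]^2=[dy]=0$. As $H^2(X;\mathbb R)=\mathbb R\,[x]$, the c-symplectic class then has $\omega^2=0$, contradicting $\omega^n\neq0$. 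Hence $\lambda\le n-2$.

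For the equality case the same bookkeeping gives $m'\le4$; as $m'=2$ is excluded exactly as above, $m'=4$ and $\sum_{z\in P_0}\deg z=2\lambda$, so $P_0=\langle z_1,z_2\rangle$ with $\deg z_i=\lambda=n-2$ and $r-q=2$. From $4=m'\ge2\sum a_i\ge2q$ I get $q\le2$. If $q=2$, the two even generators have degree $2$ and $dy_1,dy_2$ have degree $4$; degree reasons again forbid any $P_0$-term, so the $dy_i$ are quadrics and the algebra map $\mathbb R[x_1,x_2]\to H^*(X;\mathbb R)$ factors through the complete intersection $\mathbb R[x_1,x_2]/(dy_1,dy_2)$, whose socle degree is $4$; thus $\omega^3=0$, again contradicting $\omega^n\neq0$. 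Therefore $q=1$ and $r=3$, and since $H^2(X)\neq0$ the even generator $x$ has degree $2$ (the alternative $\deg x=4$ would give $H^2(X)=0$), whence $y\in P_1$ has degree $5$.

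It remains to pin down $n$; note $\lambda=n-2$ is odd and $\ge3$, so $n$ is odd and $\ge5$. Now $dy$ has degree $6$, and the only conceivable $P_0$-term is $z_1z_2$, of degree $2(n-2)$, which matches $6$ precisely when $n=5$. For odd $n\ge7$ no $P_0$-term fits, so $dy=x^3$, giving $[x]^3=0$ and hence $\omega^n=0$, a contradiction; thus $n=5$. The step I expect to be the main obstacle is exactly this interplay between $d$ and $d_\sigma$: the pure differential always yields a vanishing power of $[x]$ and would spuriously rule out every case, so the whole argument turns on tracking which mixed monomials (here $z_1z_2$ in $dy$) the genuine differential is permitted to carry by degree, and on recognizing that such a term can rescue $\omega^n\neq0$ only in the single configuration $n=5$, $q=1$, $r=3$.
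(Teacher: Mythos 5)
Your route is genuinely different from the paper's: the paper bounds $\lambda$ via Hard Lefschetz plus the observation that a closed odd generator $x$ forces an indecomposable $y$ with $dy=x^2$ in degree $2\lambda-1$, and then plays the resulting size of $\sum_j b_j$ against Theorem \ref{thm:betti1}(1); you instead run everything through the splitting of the pure model, additivity of formal dimension, and $\omega^n\neq 0$. The skeleton does reach the correct conclusion, but it rests on one claim you never justify: that the splitting can be taken with $\dim Q=\dim P_1=q$, i.e.\ $\dim P_0=r-q$ (equivalently, that the factor is equal-rank --- which is exactly what makes $m'$ even and what produces your ``$r-q=2$''). This is false for elliptic minimal Sullivan algebras in general: in $\wedge(x_2;y_3,z_5)$ with $dy=x^2$, $dz=x^3$ (pure, elliptic, of formal dimension $7$) one has $\ker(d|_{P})=0$ although $r-q=1$. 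The only general facts are $\dim P_1\ge q$ (ellipticity of the factor) and hence $\dim P_0\le r-q$. The paper invokes such a splitting only inside the proof of Theorem \ref{thm:betti1}, where extra hypotheses are available to force equality; importing the statement verbatim into the present general setting is a real gap, and formality of $X$ does not obviously close it.

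Fortunately the gap is localized: your derivation of $\lambda\le n-1$ uses only $m'\ge 2$, and your exclusion of $\lambda=n-1$ derives the structure of the factor from $m'=2$ rather than assuming equal rank, so both survive. The damage is confined to the equality case, at two points, and both are repairable with a tool you already use, namely Theorem \ref{thm:exp} applied to the elliptic factor $(\wedge(Q\oplus P_1),d_\sigma)$ itself. First, you pass from ``$m'=2$ is excluded'' straight to ``$m'=4$'', silently using evenness of $m'$; without equal rank you must kill $m'=3$ by hand: part (2) together with $Q^2\neq 0$ forces $q=1$, $a_1=1$, part (3) plus ellipticity forces $\dim P_1=1$, and then part (4) reads $3=2(b_1-1)$, impossible by parity. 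Second, your ``$r-q=2$, hence $r=3$'' is precisely the unproved equal-rank claim; instead, once $q=1$ and $m'=4$, part (4) gives $4=2\bigl(\sum_j b_j-1\bigr)-(\dim P_1-1)$, which with every $b_j\ge 2$ forces $\dim P_1=1$ and $b_1=3$, so $r=\dim P_1+\dim P_0=3$ and $y$ has degree $5$ as you wanted; the same computation in your $q=2$ subcase forces $\dim P_1=2$ with $b_1=b_2=2$, as you assumed. With these insertions the rest of your argument --- the degree bookkeeping for $dy$, the $\omega^2=0$ and $\omega^3=0$ contradictions, and the observation that the mixed term $z_1z_2$ in $dy$ can occur only when $n=5$ --- checks out.
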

\begin{proof}
We work with the complex minimal model. Let $(a_1, \cdots, a_q; b_1, \cdots, b_r)$ be the exponents of $(\wedge V,d)$. If $r=q$, then all the odd dimensional Betti numbers of $X$ vanish, which implies $\ker (d|_{V^{\mathrm{odd}}})=0$. In the following we assume that $r>q$.

By the Hard Lefschetz Theorem, all cohomology classes of degree greater than $n$ are decomposable, whereas elements in $\ker (d|_V)$ give rise to indecomposable cohomology classes, therefore $\lambda \le n$.

When $n$ is even, suppose $\lambda =n-1$. Let $x \in \ker(d|_{V^{n-1}})$ be a nonzero element, $[x] \in H^{n-1}(X;\mathbb C)$ be the cohomology class. Then $[x]^2=0$ implies that there exists some $y \in (\wedge V)^{2n-3}$ such that $dy=x^2$. By degree reason,  the differential of any decomposable $y$ cannot produce the term $x^2$, therefore $y$ must be indecomposable, i.e.~$y \in V^{2n-3}$ and henceforth $\dim V^{2n-3} \ge 1$. Also by Lemma \ref{lem:ceven}, $\dim \ker(d|_{V^{n-1}}) \ge 2$. Therefore 
$$\sum_{j=1}^r b_j \ge 2 \cdot \frac{(n-1)+1}{2} + \frac{2n-3+1}{2}=2n-1.$$ 
It follows from Theorem \ref{thm:betti1} (1) that $r=q$, a contradiction.

When $n$ is odd, suppose $\lambda=n$, then by the same argument one gets $\sum_{j=1}^r b_j \ge 2n+1$, which implies by Theorem \ref{thm:betti1} (1) that $r=q$, a contradiction.  Now suppose $\lambda = n-2$. Then the same argument implies $\sum_{j=1}^r b_j  \ge 2n-3$. The assumption $r > q$ and Theorem \ref{thm:betti1} (1) imply $\sum_{j=1}^r b_j=2n-3$. Therefore one has 
$$b_{1}=b_{2}=(n-1)/2, \ b_{3}=n-2, \ r=3, \ q=1, \ a_{1}=1.$$ 
Solving the equation in Theorem \ref{thm:exp}(4) one gets $n=5$.
\end{proof}

Having proved the above lemmas and propositions, we are able to prove Theorem \ref{thm:ci} and Theorem \ref{thm:dual}. 
 
Recall that for a compact K\"ahler manifold $X$, the \emph{Hodge level} of $H^k(X;\mathbb C)$ is defined to be the largest number $| q-p|$ such that $p+q=k$ and $h^{p,q} \ne 0$, and to be $-\infty$ if $H^k(X;\mathbb C)=0$ (see \cite{Ra}).

\begin{proof}[Proof of Theorem \ref{thm:ci}]
From the fiber bundle $S^1 \to S^{2n+1} \to \mathbb P^n$ one easily sees that the projective space $\mathbb P^n$ is a rationally elliptic space. The quadric $Q^n$ is diffeomorphic to the homogeneous space $$SO(n+2)/(SO(n) \times SO(2)),$$
 hence is rationally elliptic. 

Let $X$ be a complete intersection of complex dimension $n$, different from the projective space $\mathbb P^{n}$, let $(\wedge V, d)$ be the minimal model of $X$, then the Lefschetz hyperplane theorem implies that 
$$\dim V^{2}=1, \ \ \ V^{3}= \cdots =V^{n-1}=0.$$ 

If $n$ is even, then the inequality $\sum a_{i} \le n$ (Theorem \ref{thm:exp} (2)) implies that $\dim V^{n} \le 1$. Therefore $\dim H^{n}(X;\mathbb C) \le 2$. The fact $h^{\frac{n}{2}, \frac{n}{2}} \ne 0$ and the symmetry $h^{p,n-p}=h^{n-p,p}$ imply that the Hodge level of $H^n(X;\mathbb C)$ is $0$.  Then by 
 \cite[Table 1]{Ra} $X$ is a hypersurface of degree $2$, i.e.,  the quadric $Q^{n}$.

If $n$ is odd, by Proposition \ref{prop:lambda} one has $\ker (d|_{V^{n}}) = 0$. Therefore $H^{n}(X;\mathbb C)=0$. By definition, the Hodge level of $H^n(X;\mathbb C)$ equals $-\infty$ and $X=Q^{n}$ by \cite[Table 1]{Ra}.

Now we consider weighted complete intersections. For the definitions and properties we refer to \cite{Dim}, \cite{PS20} and the references therein. Let $X$ be a smooth weighted complete intersection of dimension $n$ in a weighted projective space $\mathbb P(\mathbf{w})$, then Lefschetz theorem with $\mathbb Q$-coefficients still holds (\cite[Theorem B22]{Dim}), and the rational cohomolgy algebra of $\mathbb P(\mathbf{w})$ is isomorphic to the truncated polynomial algebra $\mathbb Q[\alpha]/(\alpha^N)$ with degree$(\alpha)=2$ (\cite[Proposition B13]{Dim}). Therefore by the same argument for ordinary complete intersections, the Hodge level of $H^n(X;\mathbb C)$ must be $0$ or $-\infty$ (This is called diagonal in \cite{PS20}). If we further assume that $X$ is smooth, well formed and Fano, then by \cite[Theorem 1.7]{PS20}, $X$ is $\mathbb P^n$, $V(2)$ or an intersection of two quadrics in $\mathbb P^{n+2}$. The last one is an ordinary complete intersection and by the result for complete intersections this is not rationally elliptic. 
\end{proof}

\begin{proof}[Proof of Theorem \ref{thm:dual}]
Let $(\wedge V,d)$ be the complex minimal model of $X$, where $V$ is endowed with the bigradation given  in Theorem \ref{thm:big}. Since the differential $d$ has bidegree $(0,0)$, the subalgebra $\wedge (\oplus_i V^{i,0})$ is closed under the differential $d$, and $(\wedge (\oplus_i V^{i,0}), d)$ is a minimal Sullivan algebra, whose cohomology $H^p((\wedge (\oplus_i V^{i,0}), d))$ is isomorphic to $H^{p,0}(X;\mathbb C)$. Therefore $(\wedge (\oplus_i V^{i,0}), d)$ is elliptic. If the formal dimension of $(\wedge (\oplus_i V^{i,0}), d)$ is $m$, then the cohomology groups satisfy the Poincar\' e duality $H^p(\wedge (\oplus_i V^{i,0}), d) \cong H^{m-p}(\wedge (\oplus_i V^{i,0}), d)$ (see \S \ref{sec:app1}), which implies $H^{p,0}(X) \cong H^{m-p,0}(X)$. In the following we show that $m < n$. 

Let $I$ be the ideal of $\wedge V$ generated by $\oplus_{i}(V^{i,0} + V^{0,i})$, then $I$ is closed under $d$, and the quotient algebra $\wedge V/I$ with the induced differential $\bar d$ is a minimal Sullivan algebra $(\wedge W, \bar d)$, with $W \cong \oplus_{i,j > 0} V^{i,j}$. The cohomology of $(\wedge W, \bar d)$ is finite dimensional, hence $(\wedge W, \bar d)$ is an elliptic minimal Sullivan algebra. (The finite dimensionality of $H^*(\wedge W,d)$ can be seen using the LS-categories: since there is a surjective homomomorphism $(\wedge V, d) \to (\wedge W, \bar d)$, the LS-categories satisfy $\mathrm{cat}(\wedge V, d) \ge \mathrm{cat}(\wedge W, \bar d)$ (\cite[Theorem 29.5]{FHT}), whereas the finiteness of the LS-category of a minimal Sullivan algebra is equivalent to the finite dimensionality of its cohomology (\cite[Proposition 32.4]{FHT}).) Now from the equation Theorem \ref{thm:exp} (4), we have $2n - 2m$ equals to the formal dimension of $(\wedge W, \bar d)$, which is positive by the K\"ahler condition. 
\end{proof}

\begin{proof}[Proof of Corollary \ref{cor:chern}]
A simply-connect compact K\"ahler manifold $X$ with $c_{1}(X)=0$ has trivial canonical line bundle $K_X$, therefore by  Serre duality  
$$H^{n}(X,\mathcal O_{X}) \cong H^{0}(X,K_{X}) = H^{0}(X, \mathcal O_{X}) \ne 0,$$ 
a contradiction to Theorem \ref{thm:dual}. Therefore $c_{1}(X) \ne 0$. 

Assume $X$ is monotone, i.e., $c_{1}(X) = \lambda \omega$ for some $\lambda \in \mathbb R$, where $\omega$ is the K\"ahler form. The possibility $\lambda >0$ is excluded since in this case $X$ is a Fano manifold, hence $H^{p}(X,\mathcal O_{X}) =0$ for all $p >0$, contradicting the assumption that $H^p(X, \mathcal O_X) \ne 0$ for some $p>0$. Therefore $\lambda$ must be negative and $X$ is of general type.
\end{proof}

\section{Hodge diamonds}
In this section we characterize the Hodge diamond of a compact K\"ahler manifold with elliptic homotopy type, especially in dimension $4$ and prove Theorem \ref{thm:hodge}. 

\begin{Theorem}\label{thm:hodgelevel}
Let $X$ be a compact K\"ahler manifold of complex dimension $n \ge 4$ with elliptic homotopy type and positive Euler characteristic $\chi(X)$. Then either the Hodge level of $H^{m}(X;\mathbb C)$ is $\le 0$ for all $m$, or there exists an $m \le (2n-1)/3$ such that the Hodge level of $H^m(X;\mathbb C)$ is positive.
\end{Theorem}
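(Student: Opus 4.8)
The plan is to work throughout with the complex bigraded minimal model of $X$ (Theorem~\ref{thm:big}) and to read off ``positive Hodge level'' from the Hodge bidegrees of the \emph{closed} generators. First I would exploit $\chi(X)>0$: by Theorem~\ref{thm:exp}(5) it forces $q=r$, so all odd Betti numbers vanish, $H^*(X;\mathbb C)$ is concentrated in even total degree, and (as for any formal positively elliptic space) it is a complete intersection $\mathbb C[x_1,\dots,x_q]/(f_1,\dots,f_q)$ on bihomogeneous generators. By Lemma~\ref{lem:ceven} and Lemma~\ref{lem:ciso} the closed generators $C=\ker d|_V$ are all even and carry the Hodge bidegree of the indecomposable class they represent, and complex conjugation acts on the whole picture. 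The Hodge level of $H^m$ is $\le 0$ for every $m$ \emph{exactly} when every generator is diagonal (type $(a,a)$), since then every monomial is diagonal; so I may assume an off-diagonal generator exists. Let $m_0$ be the least degree of one and let $x$ have bidegree $(s,t)$ with $s>t\ge 0$, $s+t=m_0$ (even), conjugate $\bar x$ of type $(t,s)$. As all generators below degree $m_0$ are diagonal, any off-diagonal class in degree $m_0$ is indecomposable, so $m_0$ is precisely the least degree with positive Hodge level, and the theorem reduces to $m_0\le (2n-1)/3$.

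Next I would extract the easy bound from the Kähler package. Since $[x]$ is indecomposable and nothing below degree $m_0$ is off-diagonal, the primitive part of $[x]$ in bidegree $(s,t)$ is a nonzero primitive class $\alpha_0\in H^{s,t}(X)$. The Hodge--Riemann relations then give $\alpha_0\,\bar\alpha_0\,\omega^{\,n-m_0}\ne 0$ in $H^{n,n}$, so $m_0\le n$ and $\alpha_0\bar\alpha_0\ne 0$ in $H^{m_0,m_0}$. If moreover $2m_0\le n$, then $m_0\le n/2\le (2n-1)/3$ and we are done; so it remains to treat the range $2m_0>n$.

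The substance is this range, and the plan is to play Hard Lefschetz against the exponent constraints. When $2m_0>n$ the class $x\bar x\in H^{m_0,m_0}$ lies strictly above the middle, so Hard Lefschetz writes $x\bar x=\omega^{\,2m_0-n}\theta$ with $\theta$ a nonzero \emph{diagonal} class of degree $2(n-m_0)$. Thus a power of $\omega$ first enters the ideal generated by off-diagonal classes only in degree $\ge m_0$ (the least diagonal element of that ideal has degree $2m_0$, coming from a conjugate pair), which forces the diagonal quotient ring $H^*(X;\mathbb C)/(\text{off-diagonal classes})$ --- equivalently the cohomology of the elliptic diagonal sub-quotient model obtained as in Theorem~\ref{thm:dual} --- to have formal dimension at least $2m_0-2$. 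To turn this into $3m_0\le 2n-1$ I would feed it into the three-term splitting used in Theorem~\ref{thm:dual}: the holomorphic part $\wedge(\oplus_i V^{i,0})$, its conjugate, and the interior quotient $(\wedge W,\bar d)$ are elliptic of formal dimensions $\mu_h,\ \mu_h,\ 2n-2\mu_h$, summing to $2n$. Combining this with the identity $\sum_j(b_j-a_j)=n$, the inequalities $b_i\ge 2a_i$ and $\sum_i a_i\le n$ of Theorem~\ref{thm:exp}, the Hard--Lefschetz requirement $\omega^n\ne 0$, and conjugation symmetry of the relations, should bound each of the three summands below by (essentially) $m_0$ and yield $3m_0\le 2n$, which parity and $\chi(X)>0$ sharpen to $3m_0\le 2n-1$.

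The hard part will be exactly this last bookkeeping. The naive self-pairing only gives the factor $2$, i.e.\ $m_0\le n/2$, and that bound is genuinely false once $2m_0>n$: the triple products $\alpha_0^2\bar\alpha_0$ one would like to use vanish in this range (one already sees this in the extremal configurations), so the factor $3$ cannot come from multiplying three off-diagonal classes. It must instead come from the splitting $2n=\mu_h+\mu_h+(2n-2\mu_h)$, the point being that the off-diagonal conjugate pair together with the ``filler'' class $\theta$ force each of the three pieces to carry degree of order $m_0$ while $\omega^n$ stays alive. Making this simultaneous lower bound precise --- controlling the interior piece, allowing several off-diagonal generators and off-diagonal generators with nonzero higher powers, and tracking which relations are pinned by conjugation symmetry --- is where the real work lies; the examples indicate the resulting inequality $3m_0\le 2n-1$ is sharp, so no slack is available in the estimate.
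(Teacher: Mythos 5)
Your reduction to the minimal off-diagonal degree $m_0$ (indecomposability of the lowest off-diagonal classes, primitivity, the Hodge--Riemann pairing, and the disposal of the case $2m_0\le n$) coincides with the paper's proof. But the case $2m_0>n$, which you yourself flag as ``where the real work lies,'' is left as a genuine gap, and the mechanism you propose for it cannot be made to work. The three-term splitting borrowed from the proof of Theorem~\ref{thm:dual} --- the holomorphic part $\wedge(\oplus_i V^{i,0})$, its conjugate, and the interior quotient $(\wedge W,\bar d)$ --- has no a priori connection to $m_0$: if, say, $h^{p,0}=0$ for all $p>0$ while the lowest off-diagonal class has type $(3,1)$, then the holomorphic factor has formal dimension $\mu_h=0$ and your identity reads $2n=0+0+2n$, which carries no information about $m_0$ whatsoever. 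No bookkeeping with the exponent inequalities applied to those three pieces can localize the off-diagonal degree.

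The idea you are missing is precisely the one you discarded. You note that triple products such as $\alpha_0^2\bar\alpha_0$ vanish when $2m_0>n$ and conclude that ``the factor $3$ cannot come from multiplying three off-diagonal classes.'' In the paper it comes from exactly those products --- not from their nonvanishing, but from their decomposability. Write $x$ for a closed generator of bidegree $(s,t)$, $s+t=m_0$, $s\ne t$, and $\bar x$ for its conjugate. Since $2m_0>n$, Hard Lefschetz expresses each of $[x][\bar x]$, $[x]^2$, $[\bar x]^2$ as $\omega^{2m_0-n}$ times a class of degree $2(n-m_0)$. In the minimal model each such identity forces an element $y$, $z$, $w$ of $V^{2m_0-1}$ with $dy=x\bar x+\cdots$, $dz=x^2+\cdots$, $dw=\bar x^2+\cdots$: by minimality $d(\wedge^{\ge 2}V)\subset\wedge^{\ge 3}V$, so no decomposable element has a differential containing the word-length-two term $x^2$ (and that term cannot be cancelled by terms divisible by $\omega$). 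These three elements are linearly independent because, in the bigraded model of Theorem~\ref{thm:big}, they lie in the three distinct bidegrees $(m_0,m_0)$, $(2s,2t)$, $(2t,2s)$. Hence at least three $b$-exponents equal $m_0$ and $\sum_j b_j\ge 3m_0$. For the other side: if $H^*(X;\mathbb Q)$ were generated by $H^2$ then $m_0=2\le(2n-1)/3$ and you are done, so you may assume it is not, whence Theorem~\ref{thm:betti1}(2) gives $\sum_i a_i\le n-1$; since $\chi(X)>0$ forces $r=q$, Theorem~\ref{thm:exp}(4) gives $\sum_j b_j=n+\sum_i a_i\le 2n-1$. Combining the two inequalities yields $3m_0\le 2n-1$, which is the assertion.
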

\begin{proof}
The assumption $\chi(X)>0$ implies that the odd dimensional Betti numbers of $X$ vanish, therefore if the Hodge level of $H^{m}(X;\mathbb C)$ is positive, then $m$ must be an even integer.  If the cohomology algebra $H^*(X;\mathbb C)$ is generated by $H^2(X;\mathbb C)$, then either the Hodge level of $H^{2}(X;\mathbb C)$ is positive, or the Hodge numbers $h^{p,q}=0$ for all $p \ne q$, therefore the conclusion holds. In the following we assume that $H^*(X;\mathbb C)$ is not generated by $H^2(X;\mathbb C)$, hence by Theorem \ref{thm:betti1} (2), the $a$-exponents of the minimal model satisfy $\sum_{i=1}^q a_i \le n-1$. 

Let $m$ be the smallest value such that there exist $p$, $q$ with $p+q=m$, $p \ne q$ and $h^{p,q} \ne 0$. Then the cohomology classes in $H^{p,q}(X)$ are indecomposable. Let $(\wedge V,d)$ be the complex minimal model of $X$,  $C=\ker(d|_{V})$. By Theorem \ref{thm:big} there is a bigradation on $C$,  denote by $C_m^{p,q}$ the subspace of bidegree $(p,q)$ with $p+q=m$. Then by Lemma \ref{lem:ciso} we have $\dim C_m^{p,q} \ge 1$, $\dim C^{q,p}_m \ge 1$. Let $x \in C_m^{p,q}$, $x' \in C_m^{q,p}$ be non-zero elements, with corresponding cohomology classes $[x] \in H^{p,q}(X)$, $[x'] \in H^{q,p}(X)$. If $m \le n/2$, then we are done with the proof. Assume $m > n/2$, by the Hard Lefschetz Theorem there are cohomology classes $\alpha$, $\beta$ and $\gamma$ such that 
$$[x][x'] = \alpha \omega^k, \ [x]^2=\beta \omega^k, \ [x]'^2 = \gamma \omega^k.$$
Therefore there are relations  
$$dy=x x' + \cdots, \ \ dz = x^2 + \cdots, \ \ dw = x'^2 + \cdots$$
By degree reason $y$, $z$ and $w$ must be indecomposable elements, and they are linearly independent since they have different  bigradation (note  that under the bigradation the differential $d$ has bidegree $(0,0)$):  
$$y \in V_{2m-1}^{m,m}, \ \ z \in V_{2m-1}^{2p,2q}. \ \ w \in V_{2m-1}^{2q,2p}.$$ 
So we have $\dim V_{2m-1} \ge 3$, then 
$$\sum_{j=1}^r b_j \ge 3 \cdot \frac{(2m-1)+1}{2} = 3m.$$
On the other hand, the inequality $\sum_{i=1}^q a_i \le n-1$ implies $\sum_{j=1}^r b_j \le 2n-1$.  Hence $m \le (2n-1)/3$.
\end{proof}

\begin{remark}
Theorem \ref{thm:hodgelevel} is also true when $n=2,3$. The $n=2$ case follows from an easy estimate of the exponents. The $n=3$ case was proved in \cite[Proposition 4.3]{AB} using deep results in complex geometry. 
\end{remark}

In the sequel we discuss the Hodge diamond of compact K\"ahler $4$-folds with elliptic homotopy type. By Theorem \ref{thm:hodge} (1) the odd dimensional Betti numbers all vanish, we only need to consider the even dimensional Hodge numbers. 

\begin{Proposition}\label{prop:notpure}
Let $X$ be a compact K\"ahler manifold of dimension $4$ with elliptic homotopy type. Then the only possible even dimensional Hodge diamond with positive Hodge level is 
$$\begin{array}{ccccc}
& & 1 & & \\
& 1 & 2 & 1 & \\
0 & 2 & 2 & 2 & 0\\
& 1 & 2 & 1 & \\
& & 1 & &
\end{array}$$
\end{Proposition}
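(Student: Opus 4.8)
The plan is to combine the exponent/ellipticity machinery of the previous sections with the positivity built into the K\"ahler package (Hard Lefschetz, the Hodge--Riemann bilinear relations, and positivity of the K\"ahler volume). Throughout I write $(\wedge V,d)$ for the complex minimal model of $X$ with the bigradation of Theorem \ref{thm:big}, and $C=\ker(d|_V)$. First I would locate the positive Hodge level. Since $X$ is elliptic K\"ahler, Theorem \ref{thm:hodge} (1) gives that the odd Betti numbers vanish, so $\chi(X)\ge b_0=1>0$ and Theorem \ref{thm:hodgelevel} applies. As $n=4$ we have $(2n-1)/3=7/3$, so any degree $m$ carrying positive Hodge level is even, positive and $\le 2$, i.e. $m=2$; hence $h^{2,0}=h^{0,2}\ge 1$. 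Next I pin $h^{2,0}=1$ exactly by repeating the sub-model argument from the proof of Theorem \ref{thm:dual}: the subalgebra $(\wedge(\oplus_i V^{i,0}),d)$ is $d$-closed and elliptic with cohomology $\oplus_p H^{p,0}(X)$; the vanishing of the odd Betti numbers and of $p_g=h^{4,0}$ (Theorem \ref{thm:dual} (1)) leave only $H^{0,0}$ and $H^{2,0}$, so this sub-model has formal dimension $2$, and its Poincar\'e duality forces $\dim H^{2,0}=\dim H^{0,0}=1$.

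Then I would run the exponent bookkeeping. Every class of $H^2$ is indecomposable and every degree-$2$ generator is closed (as $(\wedge^{\ge 2}V)^3=0$), so $V^2=C^2$ has dimension $b_2=2+h^{1,1}$ and contributes $b_2$ to $\sum_i a_i$; with $\sum_i a_i\le n=4$ (Theorem \ref{thm:exp} (2)) this gives $b_2\le 4$, hence $h^{1,1}\in\{1,2\}$. In the case $h^{1,1}=2$ one has $\sum_i a_i=4$, so $H^*$ is generated by $H^2$ (Theorem \ref{thm:betti1} (2)) and the model is pure with exponents $(1,1,1,1;2,2,2,2)$; thus $H^*(X;\mathbb C)=\mathbb C[x,\bar x,u_1,u_2]/(q_1,\dots,q_4)$ is a regular sequence of quadrics with Betti numbers $(1,4,6,4,1)$, where $x,\bar x$ carry bidegree $(2,0),(0,2)$ and $u_1,u_2$ bidegree $(1,1)$. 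The relations $x^2,\bar x^2$ are forced by $h^{4,0}=h^{0,4}=0$, and I would argue that ellipticity (the common zero set of the $q_i$ being $\{0\}$) forces the remaining conjugate pair of relations, after setting $x=\bar x=0$, to be a regular sequence of two quadrics in $\mathbb C[u_1,u_2]$; since a relation of bidegree $(3,1)$ or $(1,3)$ vanishes identically on $\{x=\bar x=0\}$, both remaining relations must have bidegree $(2,2)$. Consequently there is no relation of bidegree $(3,1)$, so $h^{3,1}=\dim\langle xu_1,xu_2\rangle=2$ and $h^{2,2}=2$, which is exactly the asserted diamond.

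The main obstacle, and the only step using positivity rather than formal/elliptic bookkeeping, is excluding $h^{1,1}=1$. Here $b_2=3$, the exponents are forced to $(1,1,1;3,2,2)$, and $H^*(X;\mathbb C)=\mathbb C[x,\bar x,\omega]/(x^2,\bar x^2,f)$ with $f$ the unique (up to scale) bidegree-$(3,3)$ cubic $f=\omega^3+\beta\,x\bar x\,\omega$; invariance of the relation ideal under complex conjugation makes $\beta$ real, and Hard Lefschetz (injectivity of $L^2\colon H^2\to H^6$) makes $\beta\ne 0$. I would then play the two positivity statements against each other on the one-dimensional primitive space $P^4=\langle p\rangle$, $p=x\bar x+\beta^{-1}\omega^2$ of type $(2,2)$: the Hodge--Riemann relation on $P^4$ gives $\int_X p\wedge\bar p=\beta^{-1}\int_X x\bar x\,\omega^2>0$, while the Hodge--Riemann relation on $P^{2,0}=\langle x\rangle$ gives $\int_X x\bar x\,\omega^2>0$, so $\beta>0$; but positivity of the K\"ahler volume gives $\int_X\omega^4=-\beta\int_X x\bar x\,\omega^2>0$, forcing $\beta<0$, a contradiction. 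Thus $h^{1,1}=2$ and the diamond is the one displayed.

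I expect the delicate points to be the precise Hodge--Riemann sign conventions on $P^{2,0}$ and on the middle primitive part $P^4$, and the verification that $f$ is genuinely bihomogeneous of type $(3,3)$ with real coefficient. It is worth noting that, by contrast, the case distinction in the second paragraph uses \emph{only} ellipticity and the bigradation; the genuinely K\"ahler input enters exactly once, in the sign clash above, which is what makes this the heart of the argument.
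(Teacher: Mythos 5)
Your proposal is correct, and its skeleton agrees with the paper's: positive Hodge level must occur in degree $2$ by Theorem \ref{thm:hodgelevel}, so $h^{2,0}=1$, and $\sum_i a_i\le 4$ (Theorem \ref{thm:exp} (2)) forces $b_2\in\{3,4\}$, i.e. $h^{1,1}\in\{1,2\}$. (Your sub-model argument for $h^{2,0}=1$ is valid but not strictly needed, since $h^{1,1}\ge 1$ and $b_2\le 4$ already force it.) In the case $h^{1,1}=2$ your argument is the paper's in different clothing: the paper rules out the competing diamond $(h^{3,1},h^{2,2})=(1,4)$ by showing the corresponding relations $y^2,z^2,xy-k\omega y,xz-l\omega z$ cannot form a regular sequence, which is exactly your observation that a conjugate pair of relations of bidegrees $(3,1)$ and $(1,3)$ forces the common zero locus to contain the plane $\{x=\bar x=0\}$, contradicting ellipticity.

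The genuine divergence is in the case $h^{1,1}=1$ ($b_2=3$), and there your route is not merely different: it repairs a flaw in the paper's own treatment. The paper computes $\mathrm{sign}(X)=4=b_4$, hence positive definiteness of the intersection form on $H^4(X;\mathbb R)$, and then claims $Q(\alpha,\alpha)<0$ for $\alpha=\varphi+\bar\varphi$, $0\ne\varphi\in H^{3,1}$, ``by the Hodge--Riemann bilinear relation''. But Hodge--Riemann gives negativity only on \emph{primitive} $(3,1)$-classes, and here $H^{3,1}=\omega\cdot H^{2,0}$ (both are one-dimensional and $L$ is injective by Hard Lefschetz), so no nonzero primitive $(3,1)$-class exists; in fact $Q(\alpha,\alpha)=2\int_X \omega^2\wedge x\wedge\bar x>0$ by Hodge--Riemann applied to $x\in H^{2,0}$, so the paper's contradiction evaporates. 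Your argument --- deriving $\omega^3=-\beta\,x\bar x\,\omega$ with $\beta$ real and nonzero, then playing Hodge--Riemann on the primitive $(2,2)$-class $p=x\bar x+\beta^{-1}\omega^2$ against Hodge--Riemann on $H^{2,0}$ and against $\int_X\omega^4>0$ --- is sound; I checked the signs: $\int_X p^2=\beta^{-1}\int_X x\bar x\,\omega^2>0$ forces $\beta>0$, while $\int_X\omega^4=-\beta\int_X x\bar x\,\omega^2>0$ forces $\beta<0$, a genuine contradiction. Two details you should write out: the presentation $H^*(X;\mathbb C)=\mathbb C[x,\bar x,\omega]/(x^2,\bar x^2,f)$ through degree $6$ requires generation by $H^2$ (Lemma \ref{lem:gen}) together with $b_4=4$ computed from the exponents $(1,1,1;3,2,2)$; and normalizing $f=\omega^3+\beta x\bar x\,\omega$ presumes the $\omega^3$-coefficient is nonzero, which follows not from Hard Lefschetz but from Hodge--Riemann on $H^{2,0}$ (if $f$ were $x\bar x\,\omega$ then $\int_X x\bar x\,\omega^2=0$). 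Alternatively, once you know $x\bar x\ne 0$ in $H^4$ and $x^2=0$, the real class $x\bar x$ is isotropic, which already contradicts the positive definiteness that the paper's signature computation provides.
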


\begin{proof}
We have shown in Theorem \ref{thm:hodge} (1) that the Euler characteristic $\chi(X) >0$. If some Hodge level of $X$ is positive, then by Theorem \ref{thm:hodgelevel} one must have $h^{2,0} \ge 1$, therefore $b_2(X) \ge 3$. The inequality $\sum_{i=1}^q a_i \le 4$ (Theorem \ref{thm:exp} (2)) implies $b_2(X)=3$ or $4$. In the following we analyze  these two possibilities.

If $b_2(X)=3$, then the exponents of $X$ are $a=(1,1,1)$, $b=(3,2,2)$ (c.f.~\cite[Lemma 5.1]{Herm17}) . Then from the construction of the minimal model $(\wedge V,d)$ one has 
\begin{eqnarray*}\dim H^4(X;\mathbb Q) &  = &\dim S^2H^2(X;\mathbb Q) - \dim (V^3/\ker d) +\dim(\ker d|_{V^{4}}) \\
 & = & 6-2+0 \\
 & = & 4
\end{eqnarray*}
Hence the even dimensional Hodge diamond of $X$ is 
$$\begin{array}{ccccc}
& & 1 & & \\
& 1 & 1 & 1 & \\
0 & 1 & 2 & 1 & 0\\
& 1 & 1 & 1 & \\
& & 1 & &
\end{array}$$
The signature of $X$ is $\mathrm{sign}(X)=\sum_{p,q}(-1)^{q}h^{p,q}=4$ (\cite{Hirz}), hence the intersection form of $X$ is positive definite. Let 
$$Q \colon H^4(X;\mathbb R) \times H^4(X;\mathbb R) \to \mathbb R$$
 be the intersection pairing, $\varphi \in H^{3,1}(X)$ be a non-zero element, $\alpha=\varphi + \bar \varphi$, $\beta=\varphi - \bar \varphi \in H^4(X;\mathbb R)$, then by the Hodge-Riemann bilinear relation $Q(\alpha, \alpha) < 0$, $Q(\alpha, \beta)=0$, and the paring of $\alpha$ with elements in $H^{2,2}(X)$ are all $0$.  Therefore the intersection form cannot be positive definite. This possibility is excluded.

If $b_{2}(X)=4$, then exponents are $a=(1,1,1,1)$ and $b=(2,2,2,2)$ (c.f.~\cite[Lemma 5.1]{Herm17}). Then 
$$\dim H^4(X;\mathbb Q) = \dim S^2H^2(X;\mathbb Q) - \dim V^3 = 6.$$
The Hard Lefschetz Theorem implies the even dimensional Hodge diamond has the following form
\begin{eqnarray*}
\text{(i) \ } 
\begin{array}{ccccc}
& & 1 & & \\
& 1 & 2 & 1 & \\
0 & 1 & 4 & 1 & 0\\
& 1 & 2 & 1 & \\
& & 1 & &
\end{array}
\ \ \ \text{(ii) \ } 
\begin{array}{ccccc}
& & 1 & & \\
& 1 & 2 & 1 & \\
0 & 2 & 2 & 2 & 0\\
& 1 & 2 & 1 & \\
& & 1 & &
\end{array} 
\end{eqnarray*}

For the first Hodge diamond, let $\omega, x \in H^{1,1}(X)$ be a basis, with $\omega$ being the K\"ahler form, $y \in H^{2,0}(X)$, $z \in H^{0,2}(X)$ be bases, then the Hard Lefschetz theorem implies 
$$H^4(X;\mathbb C) = S^2H^2(X;\mathbb C)/\langle y^2, z^2, xy-k\omega y, xz-l\omega z\rangle$$
for some $k, l \in \mathbb C$. 
Therefore the complex minimal model of $X$ must be $(\wedge V,d)=(\wedge (\omega, x ,y, z, a, b, c, e),d)$ with $\omega, x \in C_{2}^{1,1}$, $y \in C_{2}^{2,0}$, $z\in C_{2}^{0,2}$, $da=y^{2}$, $db=z^{2}$, $dc=xy - k \omega y$ and $de=xz - l \omega z$. Note that this is a pure minimal Sullivan algebra, its cohomology 
$$H^{*}(\wedge V,d) = \mathbb C[\omega, x, y, z]/(y^{2}, z^{2}, xy - k \omega y, xz - l \omega z)$$
is finite dimensional if and only if the sequence 
$$\{ y^{2}, z^{2}, xy - k \omega y, xz - l\omega z \} $$ 
is a regular sequence (see \S \ref{sec:app2}). But clearly $xy - k\omega y$ is a zero divisor in $\mathbb C[\omega, x, y, z]/(y^{2})$, a contradiction. Hence this possibility is ruled out.  

\end{proof}

\begin{Lemma}\label{lem:gen}
Let $X$ be a compact K\"ahler manifold of dimension $4$ with elliptic homotopy type. Then $H^{*}(X;\mathbb Q)$ is generated by $H^{2}(X; \mathbb Q)$ except for the case $b_{2}(X)=1$ and $b_{4}(X)=2$.
\end{Lemma}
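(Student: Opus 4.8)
The plan is to translate the statement ``$H^*(X;\mathbb Q)$ is generated by $H^2(X;\mathbb Q)$'' into a condition on the kernel $C=\ker(d|_V)$ of the minimal model and then to run a short case analysis on the $a$-exponents. By Lemma \ref{lem:ciso} the indecomposables $Q(H)$ are isomorphic to $C$, so $H^*(X;\mathbb Q)$ is generated by $H^2$ precisely when $C$ is concentrated in degree $2$. First I would collect the vanishing statements that confine $C$. By Theorem \ref{thm:hodge} (1) the odd Betti numbers vanish, and since the natural map $C\to H^*(X;\mathbb Q)$ is injective by minimality (as used in Lemma \ref{lem:ciso}), this forces $C^{\mathrm{odd}}=0$. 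By the Hard Lefschetz Theorem every cohomology class of degree $>4$ is decomposable, so the (indecomposable) image of $C$ cannot appear there, giving $C^k=0$ for $k>4$. Finally $dV^2=0$ for degree reasons, so $C^2=V^2$. Hence $C=C^2\oplus C^4$, and the lemma reduces to showing $C^4=0$ unless $b_2(X)=1$ and $b_4(X)=2$.

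Next I would observe that $C^4\subseteq V^4$, and $V^4\neq 0$ only if the minimal model has an even generator of degree $4$, i.e.\ some exponent $a_i=2$. Since $\sum_i a_i\le n=4$ by Theorem \ref{thm:exp} (2), and since $\sum_i a_i=4$ already yields generation by $H^2$ via Theorem \ref{thm:betti1} (2), I may assume $\sum_i a_i\le 3$. Under this bound, together with the Kähler constraint $V^2\neq 0$ (which forces some $a_j=1$) and the presence of an exponent equal to $2$, the only admissible profile is $a=(2,1)$; in particular $q=2$, hence $r=q=2$ by the vanishing of odd Betti numbers, and $b_2=\dim V^2=1$.

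In this surviving case I would compute $b_4$ directly. Since $H^2$ is one-dimensional and generated by a nonzero multiple of the Kähler class $\omega$, Hard Lefschetz gives $\omega^2\neq 0$, so the decomposable part $H^2\cdot H^2$ of $H^4$ is one-dimensional; meanwhile $\dim C^4\le\dim V^4=1$. If $C^4=0$ then $H^*$ is generated by $H^2$ and there is nothing to prove; if $C^4\neq 0$ then the indecomposable class $[x_1]$ spanning $C^4$ and the decomposable class $[\omega^2]$ are linearly independent, so $b_4=\dim(H^2\cdot H^2)+\dim C^4=2$. Thus whenever $H^*$ fails to be generated by $H^2$ one is forced into $b_2=1$ and $b_4=2$, which is exactly the asserted exception.

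The main obstacle I anticipate is organizing the exponent bookkeeping cleanly. One must ensure that the reduction ``$C^4\neq 0\Rightarrow$ some $a_i=2$'', combined with $\sum_i a_i\le 3$ and the Kähler requirement $V^2\neq 0$, rigidly pins the profile to $a=(2,1)$ with no stray higher even generator (for instance an $a_i=3$ degree-$6$ generator, which is excluded since it would force $\sum_i a_i\ge 4$); and one must check that the decomposable and indecomposable contributions to $H^4$ are genuinely independent, so that $b_4$ equals $2$ rather than merely being bounded above by it. Hard Lefschetz enters twice here---to cap $C$ above the middle degree and to guarantee $\omega^2\neq 0$---so I would make both uses explicit.
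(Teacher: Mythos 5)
Your proof is correct, and its skeleton matches the paper's own argument: both reduce the statement, via Lemma \ref{lem:ciso}, to showing that $C=\ker(d|_V)$ is concentrated in degree $2$ unless $b_2=1$ and $b_4=2$; both obtain $\sum_i a_i\le 3$ from Theorem \ref{thm:exp} (2) together with the contrapositive of Theorem \ref{thm:betti1} (2); and both use the K\"ahler constraint $V^2\neq 0$ plus the presence of a higher even generator to pin the even exponents to $a=(2,1)$, whence $b_2=\dim V^2=1$. Where you genuinely diverge is in extracting $b_4=2$. The paper stays inside the exponent calculus: using Theorem \ref{thm:exp} (4) it finds that the odd exponents must be $b=(4,3)$ or $b=(5,2)$, rules out the latter by citing \cite[Lemma 5.1]{Herm17}, and then reads off $b_4=2$ from the dimension formula $\sum_i \dim H^i = \prod_j b_j/\prod_i a_i$ of \cite{FH79}. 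You instead count $H^4$ directly: since the odd Betti numbers vanish, the decomposable part of $H^4$ is $H^2\cdot H^2=\langle\omega^2\rangle$, which is one-dimensional by Hard Lefschetz, while $Q(H)^4\cong C^4$ has dimension exactly $1$ because $\dim V^4=1$; hence $b_4=1+\dim C^4=2$. Your route is more self-contained: it requires no appeal to Hermann's classification of exponent pairs, and it even explains internally why the profile $b=(5,2)$ cannot occur here --- that profile would give $b_4=1$, so $H^4$ would be spanned by the decomposable class $\omega^2$, forcing $C^4=0$ and contradicting the assumed failure of generation by $H^2$. What the paper's route buys in exchange is the complete exponent data $(a;b)=(2,1;4,3)$ in the exceptional case, i.e.\ the rational homotopy groups themselves rather than just the Betti numbers.
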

\begin{proof}
Let $(a_1, \cdots, a_r;b_1, \cdots b_r)$ be the exponents of the minimal model of X. If $H^{*}(X;\mathbb Q)$ is not generated by $H^{2}(X;\mathbb Q)$, then by Theorem \ref{thm:betti1} (2), $\sum_{i=1}^{q}a_{i} \le 3$. On the other hand, if $H^{*}(X;\mathbb Q)$ is not generated by $H^{2}(X;\mathbb Q)$, then by Lemma \ref{lem:ciso}, there exists some $V^{2k}$ with $k \ge 2$ such that $\dim V^{2k} \ge 1$, i.e., there exists an $a_i \ge 2$. So the possible exponents are:  $a=(2,1)$, $b=(4,3)$ or $b=(5,2)$. The second possibility is ruled out by \cite[Lemma 5.1]{Herm17}. Therefore the only possible exponents are $a=(2,1)$, $b=(4,3)$, with $b_{2}(X)=1$ and $b_{4}(X)=2$.
\end{proof}

\begin{proof}[Proof of Theorem \ref{thm:hodge}]
The only possible Hodge diamond with positive Hodge level is given By Proposition \ref{prop:notpure}, which is the Hodge diamond (g) in Theorem \ref{thm:hodge}. In the following we list all possible Hodge diamonds with Hodge level $\le 0$.

Since $\chi(X)=0$, one has $r=q$.  Then  the possible exponents are given in \cite[Lemma 5.1]{Herm17}
$$\begin{array}{c|c|c|c|c|c|c|c|c}

a & (1) & (1,1) & (1,1) & (1,2) & (1,3 ) & (1,1,1) & (1,1,2) & (1,1,1,1) \\
\hline
b  & (5)  & (2,4) & (3,3) & (3,4) & (2,6)  & (2,2,3) & (2,2,4) & (2,2,2,2)

\end{array}$$

For the exponents $a=(1,3)$, $b=(2,6)$,  the vanishing of $b_{3}(X)$ implies that the differential of a generator of $V^{3}$ is a non-zero multiple of $\omega^{2}$, where $\omega \in V^2$ represents the K\"ahler form. This is not allowed. 

For the exponents $a=(1,1,2)$, $b=(2,2,4)$,   the vanishing of $b_{3}(X)$ implies $d \colon V^3 \to S^2V^2$ is injective. From the construction of the minimal model one has 
$$H^4(X;\mathbb Q) \cong S^2H^2(X;\mathbb Q)/d(V^3) \oplus \ker(d|_{V^4}).$$
On the other hand from the formula in \cite{FH79}
$$\sum_{i=1}^{8} \dim H^{i}(X; \mathbb Q) = \prod_{i=1}^{q}b_{i}/\prod_{i=1}^{q}a_{i}$$
one computes the Betti number $b_4(X)=2$. This shows $\ker(d|_{V^4})\ne 0$ and therefore $H^4(X;\mathbb Q)$ is not generated by $H^2(X;\mathbb Q)$. Hence this possibility is excluded by Lemma \ref{lem:gen}. 

For the remaining possibilities, one may use the formula
$$\sum_{i=1}^{8} \dim H^{i}(X; \mathbb Q) = \prod_{i=1}^{q}b_{i}/\prod_{i=1}^{q}a_{i}$$
to compute the Betti number $b_{4}(X)=h^{2,2}$. The results are presented in the following table

$$\begin{array}{c|c|c|c|c|c|c}

b_2(X) & 1  & 1   & 2  & 2 &  3 & 4  \\
\hline
b_4(X)  & 1  & 2  & 2 & 3 & 4  & 6 

\end{array}$$
From this one gets the Hodge diamonds (a) - (f).

For the Hodge diamond (a), the k\"ahlerian condition implies that $H^*(X;\mathbb R)$ is isomorphic to $\mathbb R[\omega]/(\omega^5)$. Compact K\"ahler manifolds are formal, which means that their cohomology algebra $H^*(X;\mathbb Q)$ determines the rational homotopy type. Hence $X$ is real homotopy equivalent to $\mathbb C \mathrm P^4$, hence is rationally elliptic.

For the Hodge diamond (b), let $\omega \in H^2(X;\mathbb R)$ be the class of the K\"ahler form, $H^4(X;\mathbb R)$ has basis $\omega^2$, $x$. The Hard Lefschetz theorem gives a relation $\omega \cdot x = a \omega^3$ for some $a \in \mathbb R$. After a basis change $x \to x + a \omega^2$ one has the relation $\omega \cdot x =0$. Now we have $x^2 = \lambda \omega^4$ for some $\lambda \in \mathbb R$. The signature of $X$ equals $\sum(-1)^qh^{p,q}=2$, therefore $\lambda >0$. Rescaling $\omega \to \lambda^{-1/4} \omega$, one has $x^2 =\omega^4$. It is easy to verify that  the real cohomology algebra $H^*(X;\mathbb R)$ is isomorphic to $\mathbb R[\omega, x]/(\omega \cdot x, x^2-\omega^4)$, which is isomorphic to the real cohomology algebra of the quadric hypersurface $V(2)$.
Hence $X$ is real homotopy equivalent to $V(2)$. (This example is missing in \cite[Proposition 5.1]{Herm17}.)

Next we use the same idea as in \cite{AB} to analyze the Hodge diamonds (c) and (d). For the Hodge diamond (c), let $\omega \in H^2(X;\mathbb R)$ be the class of the K\"ahler form, $x \in H^2(X;\mathbb R)$ be a primitive class, i.~e., $\omega^3 \cdot x=0$. By the Hard Lefschetz theorem $\{\omega^2, \omega x\}$ is a basis of $H^4(X;\mathbb R)$. Therefore there is a relation 
$$x^2 = \alpha \omega^2 + \beta \omega x$$
for some $\alpha, \beta \in \mathbb R$. By the Hodge-Riemann bilinear relation 
$$0 < -Q(x, \bar x) = -\langle \omega^2 x^2, [X] \rangle = -\alpha \langle \omega^4, [X] \rangle.$$
Therefore by a rescaling of $\omega$ we may assume $\alpha=-1$. Let 
$$p_1(\omega,x)=x^2+\omega^2 -\beta \omega x, \ \ \ p_2(\omega,x)=\omega^3 x,$$ 
then it's easy to check by comparing the dimensions in each degree that $H^*(X;\mathbb R) \cong \mathbb R[\omega,x]/(p_1, p_2)$. We construct a minimal Sullivan algebra $(\wedge V,d)$ as follows: $V^2$ is generated by $\omega$ and $x$, with $d \omega=dx=0$, $V^3$ is generated by $y$ with $dy=p_1(\omega,x)$, $V^7$ is generated by $z$ with $dz=p_2(\omega,x)$. Then $(\wedge V,d)$ is a pure Sullivan algebra, with $\dim V^{\mathrm{even}}=\dim V^{\mathrm{odd}}$. If $(p_1,p_2)$ is a regular sequence in $\mathbb R[\omega, x]$, then $(\wedge V,d)$ is rationally elliptic and $H^*(\wedge V,d) \cong \mathbb R[\omega,x]/(p_1, p_2)$ (see \S \ref{sec:app2}). Notice that the ideal $(\omega, x)^5$ is contained in the ideal $( p_1, p_2)$, and $(\omega, x)$ is a maximal ideal, therefore $(\omega, x)$ is the radical of $( p_1, p_2)$, and this implies that the sequence $\{p_1, p_2\}$ is a regular sequence.  This shows that there is a quasi-isomorphism $(\wedge V,d) \to H^*(X;\mathbb R)$, mapping $\omega$ and $x$ to cohomology classes with the same name, and $y$, $z$ to $0$. Hence $(\wedge V,d)$ is the real minimal model of $X$, and $X$ is rationally elliptic, with $\pi_2(X) \otimes \mathbb Q \cong \mathbb Q^2$, $\pi_3(X) \otimes \mathbb Q \cong \mathbb Q$, $\pi_7(X) \otimes \mathbb Q \cong \mathbb Q$, and $\pi_i(X) \otimes \mathbb Q=0$ for other $i$. 

For the Hodge diamond (d), let $\omega \in H^2(X;\mathbb R)$ be the class of the K\"ahler form, $x \in H^2(X;\mathbb R)$ be a primitive class, i.~e., $\omega^3 \cdot x=0$. Since $H^4(X;\mathbb R)$ is generated by $H^2(X;\mathbb R)$ (Lemma \ref{lem:gen}),  $\{\omega^2, \omega x, x^2\}$ is a basis of $H^4(X;\mathbb R)$. By the Hard Lefschetz theorem, $\{\omega^3, \omega^2 x\}$ is a basis of $H^6(X;\mathbb R)$. We have relations
\begin{eqnarray}\label{eqn:2}
x^3& = & \alpha \omega^3 + \beta \omega^2 x 
\end{eqnarray}
\begin{eqnarray}\label{eqn:3}
\omega x^2 & = & \gamma \omega^3 + \delta \omega^2 x
\end{eqnarray}
for some $\alpha, \beta, \gamma, \delta \in \mathbb R$. The Hodge-Riemann bilinear relation implies 
$$Q(x, \bar x)=\langle \omega^2 x^2, [X] \rangle =\gamma \langle \omega^4, [X] \rangle <0$$
After a rescaling of $\omega$ we may assume that $\gamma=-1$. Then 
$$\omega^2 x^2=-\omega^4 + \delta \omega^3 x=-\omega^4,$$ 
and from (\ref{eqn:2}) we have
$$\omega x^3=\alpha \omega^4 + \beta \omega^3 x =\alpha \omega^4,$$
from (\ref{eqn:3}) we have $$\omega x^3 = -\omega^3 x + \delta \omega^2 x^2 =-\delta \omega^4.$$
Therefore $\delta=-\alpha$. This implies that $x^2+\omega^2 + \alpha \omega x$ is a primitive class in $H^4(X;\mathbb R)$, hence the Hodge-Riemann bilinear relation implies
$$0 < \langle (x^2 + \omega^2 +\alpha \omega x)^2, [X] \rangle = \alpha^2 - \beta -1.$$
Now we have seen that the cohomology algebra $H^*(X;\mathbb R)$ is generated by $\omega$ and $x$, and there are relations
$$x^3=\alpha \omega^3 -\beta \omega^2 x, \ \ \ \omega x^2 = -\omega^3 -\alpha \omega^2 x$$
for some $\alpha, \beta \in \mathbb R$ such that $\beta < \alpha^2-1$. Let 
$$p_1(\omega, x)= x^3-\alpha \omega^3 -\beta \omega^2 x, \ \ p_2(\omega,x)=\omega x^2  +\omega^3 +\alpha \omega^2 x,$$ 
then by comparing the dimensions in each degree one shows that $H^*(X;\mathbb R)$ is isomorphic to $\mathbb R[\omega, x]/( p_1, p_2)$. (This is seen as follows: from the equations
$$\omega x^3=\alpha \omega^4 +\beta \omega^3 x$$
and 
$$\omega x^3 = -\omega^3 x -\alpha \omega^2 x^2 = -\omega^3 x -\alpha \omega (-\omega^3-\alpha \omega^2 x)=\alpha \omega^4 +(\alpha ^2-1) \omega^3 x$$
we see $\omega^3 x=0$ since $\alpha^2-1 \ne \beta$. Then one readily gets
$$\omega^2 x^2 = -\omega^4, \ \ \omega x^3 = \alpha \omega^4, \ \ x^4=-\beta \omega^4, \ \ \omega^5=0.)$$
We construct a minimal Sullivan algebra $(\wedge V,d)$, with $V^2$ being generated by $\omega$ and $x$, $d\omega=dx=0$, $V^5$ being generated by $y$ and $z$, $dy=p_1(\omega,x)$, $dz=p_2(\omega,x)$. This is a pure Sullivan algebra with $\dim V^{\mathrm{even}}=\dim V^{\mathrm{odd}}$. The radical of the ideal $( p_1, p_2)$ is $(\omega, x)$, since $(\omega,x)^5 \subset (p_1, p_2)$. Therefore $\{p_1, p_2\}$ is a regular sequence and $H^*(\wedge V,d) \cong \mathbb R[\omega,x]/(p_1, p_2)$.  Therefore $(\wedge V,d)$ is the real minimal model of $X$, and $X$ is rationally elliptic, $\pi_2(X) \otimes \mathbb Q \cong \mathbb Q^2$,  $\pi_5(X) \otimes \mathbb Q \cong \mathbb Q^2$, $\pi_i(X) \otimes \mathbb Q=0$ for $i \ne 2,5$.

Finally it is easy to verify that the Hodge diamonds (a)-(f) can be realized by $\mathbb C \mathrm P^4$, the quadric hypersurface $V(2)$, $\mathbb C \mathrm P^1 \times  \mathbb C \mathrm P^3$, $\mathbb C \mathrm P^2 \times \mathbb C \mathrm P^2$, $\mathbb C \mathrm P^1 \times \mathbb C \mathrm P^1 \times \mathbb C \mathrm P^2$ and $(\mathbb C \mathrm P^1)^4$, respectively. 
\end{proof}

\begin{proof}[Proof of Corollary \ref{cor:hc}]
Let $X$ be a projective manifold with Hodge diamond (a), (c)-(f), since $H^*(X;\mathbb Q)$ is generated by $H^2(X;\mathbb Q)$ and the Hodge diamond has Hodge level $\le 0$, the Lefschetz $(1,1)$-theorem implies that every Hodge class is a rational linear combination of cohomology classes of subvarieties. For the Hodge diamond (g), by rescaling the K\"ahler form $\omega$ we may assume that the cohomology class $[\omega]$ is rational,  $[\omega] \in H^2(X;\mathbb Q)$. Since $h^{1,1}=h^{2,2}=2$,  the Hard Lefschetz theorem implies that 
$$H^2(X;\mathbb Q) \cap H^{1,1}(X) \to H^4(X;\mathbb Q) \cap H^{2,2}(X), \ \ \ \alpha \mapsto \alpha \wedge \omega$$
is an isomorphism, i.~e., $\mathrm{Hdg}^2(X)$ is generated by $\mathrm{Hdg}^1(X)$. Hence the Hodge conjecture holds.
\end{proof}


\begin{thebibliography}{50}
\bibitem{AB} J.~Amor\'os and I.~Biswas, \emph{Compact K\"ahler manifolds with elliptic homotopy type}, Adv.~Math.~224 (2010), no. 3, 1167-1182.

\bibitem{Dim} A.~Dimca, \emph{Singularities and Topology of Hypersurfaces}, Springer-Verlag, 1992. 

\bibitem{FH79} J.~B.~Friedlander, S.~Halperin, \emph{An arithmetic characterization of the rational homotopy groups of certain spaces}, Invent.~Math, 1979 53(2), 117-133.

\bibitem{FOT} Y.~Felix, J.~Oprea and D.~Tanr\'e, \emph{Algebraic Models in Geometry}, Oxford Graduate Text in Mathematics 17,  Oxford University Press, 2008.

\bibitem{FHT} Y.~ Felix, S.~Halperin and J.-C.~Thomas, \emph{Rational Homotopy Theory}, Springer New York, 2001.

\bibitem{GH82} K.~Grove, S.~Halperin, \emph{Contributions of rational homotopy theory to global problems in geometry}, Inst.~Hautes \'Etudes Sci.~Publ.~Math.~56, 171-177 (1982).

\bibitem{Herm17} M.~Hermann, \emph{Classification and characterization of rationally elliptic manifolds in low dimensions}, Math.~Z.~(2018) 288: 1081-1101.

\bibitem{Hirz} F.~Hirzebruch, \emph{Topological Methods in Algebraic Geometry}, Springer-Verlag New York, 1966.

\bibitem{Ne79} J.~Neisendorfer, \emph{The rational homotopy groups of complete intersections}, Illinois J.~Math., Vol.~23, No.~2 (1979), 175-182.

\bibitem{PS20} V.~Przyjalkowski and C.~Shramov, \emph{Hodge level for weighted complete intersections}, Collect.~Math.~71 (2020), no.~3, 549-574.

\bibitem{Quillen} D.~Quillen, \emph{Rational homotopy theory}, Ann.~Math.~90 (1969), 205-295.

\bibitem{Ra} M.~Rapopport, \emph{Compl\' ement \` a l'article de P.~Deligne La conjecture de Weil pour les surfaces $K3$}, Inventiones Math.~15, 227-236 (1972).

\bibitem{Sullivan} D.~Sullivan, \emph{Infinitesimal computations in topology}, Publ.~I.~H.~E.~S., 47 (1977), 269-331.

\bibitem{YS} S.~T.~Yau and R.~Schoen, \emph{Lectures on Differential Geometry} (Chinese), Higher Education Press, 2004.

\bibitem{Wil86} P.~Wilson, \emph{On projective manifolds with the same rational cohomology as $\mathbb. P^4$},

\end{thebibliography}
\end{document}